% SIAM Article Template
% \documentclass[onefignum,onetabnum]{siamart171218}
% \documentclass[review,onefignum,onetabnum]{siamart171218}
\documentclass[onefignum,onetabnum]{siamart171218}

% Information that is shared between the article and the supplement
% (title and author information, macros, packages, etc.) goes into
% ex_shared.tex. If there is no supplement, this file can be included
% directly.

% Optional PDF information

% The next statement enables references to information in the
% supplement. See the xr-hyperref package for details.

% Packages and macros go here

\usepackage{lipsum}
\usepackage{amsfonts}
\usepackage{graphicx}
\usepackage{epstopdf}
\usepackage{algorithm}
\usepackage{algorithmic}
\usepackage{tikz}
\usetikzlibrary{arrows}

\usepackage{multirow}
\usepackage{listings}
\usepackage{mathtools}
\usepackage{latexsym,amsmath,amsfonts,amscd}
\usepackage{subfigure}
\usepackage{bbm}
\newtheorem{rmk}{Remark}

\ifpdf
  \DeclareGraphicsExtensions{.eps,.pdf,.png,.jpg}
\else
  \DeclareGraphicsExtensions{.eps}
\fi

% Add a serial/Oxford comma by default.

% Sets running headers as well as PDF title and authors
\headers{A MONOTONE $Q^1$ FEM FOR elliptic equations}{H. Li and X. Zhang}

% Title. If the supplement option is on, then "Supplementary Material"
% is automatically inserted before the title.
\title{A monotone $Q^1$ finite element method for anisotropic elliptic equations}

% Authors: full names plus addresses.
\author{Hao Li \thanks{Department of Applied Mathematics, The Hong Kong Polytechnic University, Hong Kong.
 Email: \email{hao94.li@polyu.edu.hk}}.
\and Xiangxiong Zhang
\thanks{Department of Mathematics,
Purdue University,
150 N. University Street,
West Lafayette, IN 47907-2067, USA. 
Email: \email{zhan1966@purdue.edu}.}
}

\usepackage{amsopn}

\begin{document}

\maketitle

% REQUIRED
\begin{abstract}
We construct a monotone continuous $Q^1$ finite element method on the uniform mesh for the anisotropic diffusion problem with a diagonally dominant diffusion coefficient matrix. The monotonicity implies the discrete maximum principle. Convergence of the new scheme is rigorously proven. On quadrilateral meshes, the matrix coefficient conditions translate into specific mesh constraints.
\end{abstract}

% REQUIRED
\begin{keywords}
Inverse positivity, $Q^1$ finite element method, monotonicity, discrete maximum principle, anisotropic diffusion 
\end{keywords}

% REQUIRED
\begin{AMS}
65N30, 65N15,  65N12
\end{AMS}

\section{Introduction}
\subsection{Monotonicity and discrete maximum principle}
Consider solving the following elliptic equation on $\Omega=(0,1)^2$ with 
Dirichlet boundary conditions:
\begin{equation}
\label{elli-pde-1}
 \begin{split}
 \mathcal L u\equiv -\nabla\cdot(\mathbf{a}\nabla u )+cu=f & \quad \mbox{on} \quad \Omega, \\
 u=g & \quad \mbox{on}\quad  \partial\Omega,
\end{split}
\end{equation}
where the diffusion matrix $\mathbf{a}(\mathbf x)\in \mathbb{R}^{2 \times 2}$, 
$c(\mathbf{x})$, $f(\mathbf{x})$ and $g(\mathbf{x})$ are sufficiently smooth functions over $\bar{\Omega}$ or $\partial\Omega$. 
We assume that $\forall \mathbf{x} \in \Omega$, $\mathbf{a}(\mathbf{x})$ is symmetric and uniformly positive definite on $\Omega$. 
In the literature, \eqref{elli-pde-1} is called a heterogeneous anisotropic diffusion problem when the eigenvalues of $\mathbf{a}(\mathbf{x})$ are unequal and vary over on $\Omega$.
% and functions $\mathbf{b}$ and $c$ satisfy
% $$
% c(\mathbf{x})-\frac{1}{2} \nabla \cdot \mathbf{b}(\boldsymbol{x}) \geq 0, \quad c(\mathbf{x}) \geq 0, \quad \forall \mathbf{x} \in \Omega.
% $$
For a smooth function $u \in C^2(\Omega) \cap C(\bar{\Omega})$, a maximum principle holds \cite{evans}: $$\mathcal{L} u \leq 0 \quad\mbox{on} \quad\Omega \quad \Longrightarrow  \quad \max _{\bar{\Omega}} u \leq \max \left\{0, \max _{\partial \Omega} u\right\}.$$ In particular,
\begin{equation}\label{DMP}
\mathcal{L} u=0 \text { in } \Omega \Longrightarrow|u(\mathbf{x})| \leq \max _{\partial \Omega}|u|, \quad \forall(\mathbf{x}) \in \Omega.
\end{equation}

% For simplicity, we only consider the homogeneous  Dirichlet boundary condition, i.e.  $g=0$.
The anisotropic diffusion problem \eqref{elli-pde-1} arises from various areas of science and engineering, including plasma physics, Lagrangian hydrodynamics, and image processing. To avoid spurious oscillations or non-physical numerical solution, it is desired to have numerical schemes to satisfy \eqref{DMP} in the discrete sense.  We are interested in a linear approximation to $\mathcal{L}$ which can be represented as a matrix $L_h$. The matrix $L_h$ is called monotone if its inverse only has nonnegative entries, i.e., $L_h^{-1} \geq 0$.  Monotonicity of the scheme is a sufficient condition for the discrete maximum principle and has various applications espeically for parabolic problems, see \cite{bramble1969convergence, xu1999monotone, li2020monotonicity, hu2023, shen2022discrete, liu2022structure, AAM-40-161, zhang2024monotonicity,liu2023positivitypreserving, liu2022structure, doi:10.1137/18M1208551,li2023high}.

\subsection{Monotone schemes for anisotropic diffusion equations}
Monotone (or positive-type in some literature) numerical methods for problem \eqref{elli-pde-1} have received considerable attention, e.g., see \cite{kuzmin2009constrained, li2010anisotropic, li2007mesh, lipnikov2007monotone, liska2008enforcing, mlacnik2006unstructured, yuan2008monotone, sharma2007preserving, le2009nonlinear, cances2013monotone, ngo2016monotone}. The major efforts of studying linear monotone schemes take advantage of $M$-matrix (see \cite{plemmons1977m} for the definition), either by showing the coefficient matrix is $M$-matrix directly or the coefficient matrix can be factorized into a product of $M$-matrices. In the following, we call a numerical scheme satisfying \textit{$M$-matrix property} if the corresponding coefficient matrix is an $M$-matrix. 

By factorizing the stiffness matrix into a product of $M$-matrices, the monotonocity can still be ensured. For a nine-point scheme on a two-dimensional quadrilateral grid, the matrix condition for monotonicity with specific splitting strategy in \cite{nordbotten2007monotonicity} aligns with the Lorenz's condition presented in \cite{lorenz1977inversmonotonie, li2020monotonicity}. The difference is that in \cite{lorenz1977inversmonotonie, li2020monotonicity}, only the existence of the factorization was proved while in \cite{nordbotten2007monotonicity} the exact matrix factorization was found explicitly.

In \cite{motzkin1952approximation}, it is proved that a monotone finite difference scheme exists for any linear second-order elliptic problem on fine enough uniform mesh but a finite difference method with fixed stencil for all the problems satisfying the $M$-matrix property does not exist. With nonnegative directional splittings, \cite{weickert1998anisotropic, greenspan1965non, ngo2016monotone} propose to construct finite difference schemes for elliptic operators in the non-divergence form and divergence form. Particularly in \cite{ngo2016monotone}, it is shown that a monotone scheme satisfying the $M$-matrix property can be constructed for continuous diffusion matrix for sufficiently fine mesh and sufficiently large finite difference stencil.

In \cite{li2010anisotropic}, for the $P^1$ finite elements in two and three dimensions, the author generalized the well known non-obtuse angle condition for anisotropic diffusion problem in the sense to have the dihedral angles of all mesh elements, measured in a metric depending on $\mathbf{a}(\mathbf{x})$, be non-obtuse. It reduces to the non-obtuse angle condition for isotropic diffusion matrices when $\mathbf{a}(\mathbf{x}) = \alpha(\mathbf{x})\mathbb{I}$, where $\mathbb{I}$ is the identity matrix. The formulation was also utilized in \cite{li2010anisotropic} for the construction of the so called $M$-uniform meshes on which the numerical scheme is monotone. The approach to show monotonicity in \cite{li2010anisotropic} is to write the global matrix as the sum of local contributions. In \cite{huang2011discrete},  the Delaunay condition is extended to anisotropic diffusion problems through a refined analysis studying the whole stiffness matrix for the two-dimensional situation.
The analysis of \cite{li2010anisotropic} was extended to the anisotropic diffusion--convection--reaction problems in \cite{lu2014maximum}.

For the $Q^1$ finite elements, research on monotonicity has predominantly been focused on meshes whose elements are rectangular blocks. For the two-dimensional Poisson equation, it was noted in \cite{christie1984maximum} that the $M$-matrix property is violated when the aspect ratio, i.e. the ratio between the length of the longer edge and the shorter edge of the element, becomes excessively large. Then the discrete maximum principle is not guaranteed.

\subsection{Contributions and organization of the paper}
It is well known that the second-order accurate linear schemes, such as mixed finite element and multi-point flux approximation, do not always satisfy monotonicity on distorted meshes or with high anisotropy ratio. In this paper, we construct a monotone $Q^1$ finite element method  for solving the equation \eqref{elli-pde-1}, which is second-order accurate for function values. 

To analyze the monotonicity of the stiffness matrix, we approximate  integrals with a specific quadrature rule, particularly, the linear combination of the trapezoid rule and midpoint rule. Then we demonstrate that the continuous $Q^1$ finite element method with the specific quadrature rule, when applied to the anisotropic diffusion problem on a uniform mesh, ensures monotonicity for the problem with a diagonally dominant diffusion coefficient matrix. The method is linear and second-order accurate. The convergence of the function values for this method is rigorously proven. The coefficient constraints become mesh constraints when this $Q^1$ finite element method is applied on general quadrilateral meshes. 

This paper is organized as follows. In Section \ref{sec-preliminaries}, we introduce the notations and review some standard quadrature error estimates. In Section \ref{sec-Q1-scheme}, we derive the $Q^1$ scheme for anisotropic diffusion equation with Dirichlet boundary condition and the coefficient constraints for the stiffness matrix to be an $M$-matrix. In Section \ref{sec-convergence}, the convergence rate of function values is proved. In Section \ref{sec-gen-quad-mesh}, we discuss the extension to general quadrilateral meshes. Numerical results are given in Section \ref{sec-test}.

\section{Preliminaries}\label{sec-preliminaries}
\subsection{Notation and tools}
We introduce some notation and useful tools as follows. 
\begin{itemize}
\item For the problem dimension $d$, though we only consider the case $d = 2$, sometimes we keep the general notation $d$ to illustrate how the results are influenced by the dimension.
\item For the $Q^1$  finite element space, i.e., tensor product of linear polynomials,  the local space is defined on a reference element $\hat{K}$, e.g., $\hat{K}=[0,1]^2$. Then, the finite element space on a physical mesh element $e$ is given by the reference map from $\hat{K}$ to $e$.  The reference element $\hat{K}$ is as Figure \ref{ref-element}.
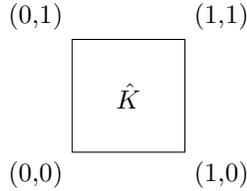
\begin{figure}[h]\label{ref-element}
\centering
\begin{tikzpicture}
\draw

 (0,1.5)  coordinate (2)
    (0,0) coordinate (1);
  \draw
    (0,0) rectangle node[] {$\hat K$} 
    (1.5,1.5)  coordinate (3)
    (1.5,0) coordinate (4)
  ;
  \path
    (1) node[below left] {(0,0)}
    (2) node[above left] {(0,1)}
    (3) node[above right] {(1,1)}
    (4) node[below right] {(1,0)}
  ;
%  \draw[->] (-0.5,0)--(2.5,0) node[right]{$x_1$};
%\draw[->] (0,-0.5)--(0,1.5) node[above]{$x_2$};
\end{tikzpicture}
\caption{The reference element.}
\end{figure}

On  a reference element $\hat K$, we have the Lagrangian basis $\hat \phi_{0,0},\, \hat \phi_{0,1},\, \hat \phi_{1,1}, \hat \phi_{1,0}$ as 
\begin{equation}\label{basis-func}
\hat{\phi}_{0,0} = (1-\hat x_1)(1 - \hat x_2),
\quad
\hat \phi_{0,1} = (1-\hat x_1)\hat x_2,
\quad
\hat \phi_{1,1} = \hat x_1 \hat x_2,
\quad
\hat \phi_{1,0} = \hat x_1(1- \hat x_2).
\end{equation}

\item We will use ${ }^{\wedge}$ for a function to emphasize the function is defined on or transformed to the reference element $\hat{K}$ from a physical mesh element.

\item For a quadrilateral element $e$, we assume $\mathbf{F}_e= (F_{e1}, F_{e2})^T$ is the bilinear mapping such that $\mathbf{F}_e(\hat{K})=e$.
Let $\mathbf{c}_{i,j}, \, i,j = 0,1$ be the vertices of the quadrilateral element $e$. The mapping $\mathbf{F}_e$ can be written as
$$
\mathbf{F}_e=\sum_{\ell=0}^1 \sum_{m=0}^1 \mathbf{c}_{\ell, m} \hat{\phi}_{\ell, m}.
$$

\item $Q^1(\hat K)=\left\{p(\mathbf{x})=\sum_{i=0}^1 \sum_{j=1}^1 p_{i j} \hat{\phi}_{i,j}(\hat{\mathbf{x}}), \, \hat{\mathbf{x}}\in \hat{K} \right\}$ is the set of $Q^1$ polynomials on the reference element $\hat{K}$.
\item $Q^1(e)=\left\{v_h \in H^1(e): v_h \circ \mathbf{F}_e \in Q_1(\hat K)\right\}$ is the set of $Q^1$ polynomials on an element $e$.
\item $V^h=\left\{p(\mathbf{x}) \in H^1\left(\Omega_h\right):\left.p\right|_e \in Q^1(e), \, \forall e \in \Omega_h\right\}$ denotes the continuous $Q^1$ finite element space on $\Omega_h$. 
\item $V_0^h=\left\{v_h \in V^h: v_h=0 \text{ on } \partial \Omega\right\}$
\item
Let $(f,v)_e$ denote the inner product in  $L^2(e)$ and $(f,v)$ denote the inner product in $L^2(\Omega)$:
\[(f,v)_e=\int_{e} fv\,d\mathbf{x},\quad (f,v)=\int_{\Omega} fv\,d\mathbf{x}=\sum_e (f,v)_e.\]
\item Let $\langle f,v\rangle_{e,h}$ denote the approximation to $(f,v)_e$ by the mixed quadrature defined in \eqref{mixed-quad} over element $e$ with some specified quadrature parameter and $\langle f,v\rangle_h$ denotes the approximation to $(f,v)$ by 
\[\langle f,v\rangle_h=\sum_e \langle f,v\rangle_{e,h}.\]
\item Let $E(f)$ denote the quadrature error for integrating $f(\mathbf{x})$ on element $e$. Let $\hat{E}(\hat{f})$ denote the quadrature error for integrating $\hat{f}(\hat{\mathbf{x}})=f\left(\mathbf{F}_e(\hat{\mathbf{x}})\right)$ on the reference element $\hat{K}$. Then $E(f)=h^d \hat{E}(\hat{f})$ on uniform rectangular mesh with mesh size $h$.

\item The norm and semi-norms for $W^{k, p}(\Omega)$ and $1 \leq p<+\infty$, with standard modification for $p=+\infty$ :
\begin{align*}
\|u\|_{k, p, \Omega}=&\left(\sum_{i+j \leq k} \int_{\Omega}\left|\partial_{x_1}^i \partial_{x_2}^j u(x_1, x_2)\right|^p d\mathbf{x}\right)^{1 / p}, \\ 
|u|_{k, p, \Omega}=&\left(\sum_{i+j=k} \int_{\Omega}\left|\partial_{x_1}^i \partial_{x_2}^j u(x_1, x_2)\right|^p d\mathbf{x}\right)^{1 / p}, \\ 
[u]_{k, p, \Omega}=&\left(\int_{\Omega}\left|\partial_{x_1}^k u(x_1, x_2)\right|^p d\mathbf{x}+\int_{\Omega}\left|\partial_{x_2}^k u(x_1, x_2)\right|^p d\mathbf{x}\right)^{1 / p} .
\end{align*}

\item In the special case where $\omega=\Omega$, we drop the subscript, i.e. $(\cdot, \cdot):=(\cdot, \cdot)_{\Omega}$ and $\|\cdot\|:=\|\cdot\|_{\Omega}$.

\item For any $v_h \in V^h, 1 \leq p<+\infty$ and $k \geq 1$, we will abuse the notation to denote the broken Sobolev norm and semi-norms by the following symbols
\begin{align*}
\left\|v_h\right\|_{k, p, \Omega}:=&\left(\sum_e\left\|v_h\right\|_{k, p, e}^p\right)^{\frac{1}{p}},  \\
\left|v_h\right|_{k, p, \Omega}:=&\left(\sum_e\left|v_h\right|_{k, p, e}^p\right)^{\frac{1}{p}}, \\
\left[v_h\right]_{k, p, \Omega}:=&\left(\sum_e\left[v_h\right]_{k, p, e}^p\right)^{\frac{1}{p}}.
\end{align*}
\item  For simplicity, sometimes we may use $\|u\|_{k, \Omega},|u|_{k, \Omega}$ and $[u]_{k, \Omega}$ denote norm and semi-norms for $H^k(\Omega)=W^{k, 2}(\Omega)$. When there is no confusion, $\Omega$ may be dropped in the norm and semi-norms, e.g., $\|u\|_k:=\|u\|_{k, \Omega}$.

\item Inverse estimates for polynomials: there exists a constant $C_I>0$, independent of $h$ and $e$, such that for
$$
\left\|v_h\right\|_{k+1} \leq C_I h^{-1}\left\|v_h\right\|_{k}, \quad \forall v_h \in V^h, k \geq 0 .
$$

\item Elliptic regularity holds for the problem \eqref{hom-var}:
$$
\|u\|_2 \leq C\|f\|_0
$$

\item Let $\Omega_h$ is a finite element mesh for $\Omega$.  For each element $e \in \Omega_h$,  we denote $\mathbf{\bar{a}}_e = (\bar{a}^{ij}_e)$ as an approximation to the average of $\mathbf{a}$ on element $e$, i.e.  $\bar{a}^{ij}_e = \frac{1}{meas(e)}\int_e a^{ij}d\mathbf{x}$. 
% Specifically, we choose $\bar{a}_e^{i j}$ as the the function value of $a^{i j}$ at the center of element $e$. 
Then we define piece-wise constant function $\mathbf{\bar{a}}$ on $\Omega$ as
\begin{equation*}
\mathbf{\bar{a}}(\mathbf{x}) = \mathbf{\bar{a}}_e,\quad \textrm{for $\mathbf{x} \in e$}.
\end{equation*}

\item Define the projection operator $\hat{\Pi}_1: \hat{u} \in L^1(\hat{K}) \rightarrow \hat{\Pi}_1 \hat{u} \in Q^1(\hat{K})$ by
\begin{equation}\label{q1-proj}
\int_{\hat{K}}\left(\hat{\Pi}_1 \hat{u}\right) \hat{w} d\hat{\mathbf{x}}=\int_{\hat{K}} \hat{u} \hat{w}  d\hat{\mathbf{x}}, \quad \forall \hat{w} \in Q^1(\hat{K}) .
\end{equation}

Observe that all degrees of freedom of $\hat{\Pi}_1 \hat{u}$ can be expressed as a linear combination of $\int_{\hat{K}} \hat{u} \hat{p} d \hat{\mathbf{x}}$ where $\hat{p}(\mathbf{x})$ takes the forms $1, \hat{x}_1, \hat{x}_2$, and $\hat{x}_1 \hat{x}_2$. This implies that the $H^1(\hat{K})$ (or $H^2(\hat{K})$) norm of $\hat{\Pi}_1 \hat{u}$ is dictated by $\int_{\hat{K}} \hat{u} \hat{p} d \hat{\mathbf{x}}$.
Utilizing the Cauchy-Schwartz inequality, we deduce:
$$
\left|\int_{\hat{K}} \hat{u} \hat{p} d \hat{\mathbf{x}}\right| \leq\|\hat{u}\|_{0,2, \hat{K}}\|\hat{p}\|_{0,2, \hat{K}} \leq C\|\hat{u}\|_{0,2, \hat{K}}
$$
From which it follows that:
$$
\left\|\Pi_1 \hat{u}\right\|_{1,2, \hat{K}} \leq C\|\hat{u}\|_{0,2, \hat{K}}
$$
This establishes that $\hat{\Pi}_1$ acts as a continuous linear mapping from $L^2(\hat{K})$ to $H^1(\hat{K})$. Similarly, by extending this argument, we can also demonstrate that $\hat{\Pi}_1$ is a continuous linear mapping from $L^2(\hat{K})$ to $H^2(\hat{K})$.

\item We denote all the the vertices of $\Omega_h$ inside $\Omega$ by $\mathbf{x}_j$, $j=1, \ldots, N_h$ and all the the vertices of $\Omega_h$ on $\partial\Omega$ by $\mathbf{x}_j$, $j=N_h+1, \ldots, N_h+N_h^{\partial}$. 
The corresponding Lagrange basis functions  in $V_h$ are denoted by $\varphi_i$, $i=1, \ldots N_h+N_h^{\partial}$, which are continuous in $\Omega$, linear polynomials in each element $e$ and
$$
\varphi_i\left(\mathbf{x}_j\right)=\delta_{ij},\quad i,j = 1, \ldots, N_h+N_h^{\partial}.
$$

\end{itemize}

\subsection{Mixed quadrature}
To analyze and impose the monotonicity of the stiffness matrix, we will use numerical quadrature rules to approximate integrals. As we will see, the choice of quadrature rules can significantly affect the monotonicity of the numerical schemes.

For a one-dimensional integral of function $f$ over the interval $[0, 1]$, we can approximate $\int_0^1 f(\hat{x}) d \hat{x}$ using either the trapezoid rule, given by $ \frac{f(0)+f(1)}{2}$, or the midpoint rule,
$f\left(\frac{1}{2} \right)$. Both quadrature offer second-order accuracy.  We will use the linear combination of these two kinds of quadrature as follows:
\begin{equation}\label{mixed-quad-1d} 
\begin{aligned}
\int_0^1 f(\hat{x}) d \hat{x}  \simeq &\lambda \frac{f(0)+f(1)}{2} + (1- \lambda) f\left(\frac{1}{2} \right) \\
= &\hat \omega_1f(\hat \xi_1)+\hat \omega_2 f(\hat \xi_2)  + \hat \omega_3 f(\hat \xi_1) ,
\end{aligned}
\end{equation}
where $\lambda$ is a parameter to be determined and 
\begin{equation}\label{mixed-weights-pts}
\hat \omega_1 = \frac{\lambda}{2}, \quad \hat \omega_2 = 1-\lambda, \quad \hat \omega_3 = \frac{\lambda}{2}, \quad \hat \xi_1 = 0, \quad \hat \xi_2 = \frac12, \quad \hat \xi_3 = 1.
\end{equation}
When $\lambda = 1$, the mixed quadrature recovers the trapezoid rule and when $\lambda = 0$ the mixed quadrature recovers the midpoint rule.

To approximate integration on square $\hat K$, we may use the mixed quadrature \eqref{mixed-quad-1d} with different parameters $\lambda^1$ and $\lambda^2$ for different dimension $x_1$ and $x_2$ respectively.  By Fubini's theorem, 
\begin{equation*}
\begin{aligned}
& \int_{\hat{K}} f(\hat{\mathbf{x}}) d \hat{\mathbf{x}}=\int_0^1 \int_0^1 f(\hat x_1, \hat x_2) d \hat x_1 d\hat x_2=\int_0^1\left(\int_0^1 f\left(\hat x_1, \hat x_2\right) d \hat x_2\right) d \hat x_1\\
 \simeq & \int_0^1\left(\sum_{q=1}^3 \hat{\omega}^2_q f\left(\hat x_1, \hat{\xi}_q\right)\right) d \hat x_1 
 \simeq \sum_{p=1}^{3} \hat{\omega}^1_p\left(\sum_{q=1}^{3} \hat{\omega}^2_q f\left(\hat{\xi}_p, \hat{\xi}_q\right)\right) = \sum_{p=1}^{3} \sum_{q=1}^{3} \hat{\omega}^1_p \hat{\omega}^2_q f\left(\hat{\xi}_p, \hat{\xi}_q\right),
\end{aligned}
\end{equation*}
where $\omega^j_i$ are just  $\omega_i$ but replacing $\lambda$ with $\lambda^j$ in \eqref{mixed-weights-pts} for  $i=1,2,3$, $j =1,2$.

On the reference element $\hat K$, for convenience, to denote the above quadrature for integral approximation with parameter $\boldsymbol{\lambda} = \left(\lambda^1, \lambda^2 \right)$, we will use the following notation
\begin{equation}\label{mixed-quad-ref}
\int_{\hat{K}} \hat{f}(\hat{\mathbf{x}}) d^h_{\boldsymbol{\lambda}} \hat{\mathbf{x}} :=  \sum_{p=1}^{3} \sum_{q=1}^{3} \hat{\omega}^1_p \hat{\omega}^2_q f\left(\hat{\xi}_p, \hat{\xi}_q\right).
\end{equation}

 Given the quadrature parameter $\boldsymbol{\lambda}_e = \left(\lambda_e^1, \lambda_e^2 \right)$, the quadrature approximation to $\int_{e} f(\mathbf{x}) d\mathbf{x}$ is denoted as
\begin{equation}\label{mixed-quad}
\int_{e} f(\mathbf{x}) d^h_{\boldsymbol{\lambda}_e} \mathbf{x} :=  \int_{\hat{K}} f\circ \mathbf{F}_e(\hat{\mathbf{x}}) d^h_{\boldsymbol{\lambda}_e}\hat{\mathbf{x}}.
\end{equation}

Then we define the quadrature approximation over the entire domain $\Omega$ as
\begin{equation}\label{mixed-quad-domain}
\int_{\Omega} fd^h_{\boldsymbol{\lambda}_{\Omega}}\mathbf{x} :=  \sum_{e\in \Omega_h}\int_{e} fd^h_{\boldsymbol{\lambda}_e} \mathbf{x},
\end{equation}
where $\boldsymbol{\lambda}_{\Omega} = \left( \boldsymbol{\lambda}_e\right)_{e\in \Omega_h}$ can be viewed as a vector-valued piece-wise constant function, with values $\boldsymbol{\lambda}_e$ which differ across different elements.

As a particular instance, $\int_{\Omega} fd^h_{1}\mathbf{x}$ denotes the case $\boldsymbol{\lambda}_e = (1, 0)$ for all $e\in \Omega_h$, i.e. the integral on each element are approximated by the trapezoid rule in all directions.

\subsection{Quadrature error estimates}

The Bramble-Hilbert Lemma for $Q^k$ polynomials can be stated as follows, see Exercise 3.1.1 and Theorem 4.1.3 in \cite{ciarlet2002finite}:
\begin{theorem}
\label{bh-lemma}
If a continuous linear mapping  $\hat\Pi: H^{k+1}(\hat K)\rightarrow H^{k+1}(\hat K)$
satisfies $\hat\Pi \hat v=\hat v$ for any $\hat v\in Q^k(\hat K)$, then 
\begin{equation}
\|\hat u-\hat \Pi\hat u\|_{k+1,\hat K}\leq C [\hat u]_{k+1, \hat K}, \quad \forall\hat u\in H^{k+1}(\hat K).
\label{bh1}
\end{equation}
Therefore if $l(\cdot)$ is a continuous linear form on the space $H^{k+1}(\hat K)$ satisfying
$l(\hat v)=0, \,\forall \hat v\in Q^k(\hat K),$
then \[|l(\hat u)|\leq C \|l\|'_{k+1, \hat K} [\hat u]_{k+1,\hat K},\quad \forall\hat u\in H^{k+1}(\hat K),\]
where $ \|l\|'_{k+1, \hat K}$ is the norm in the dual space of $H^{k+1}(\hat K)$.
\end{theorem}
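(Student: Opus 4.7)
The plan is to adapt the standard Bramble-Hilbert proof from the $P^k$ setting (with the full semi-norm $|\cdot|_{k+1,\hat K}$) to the $Q^k$ setting (with the pure-axial semi-norm $[\cdot]_{k+1,\hat K}$). The only place the modification matters is in the underlying Deny-Lions lemma; the polynomial-invariance trick works identically.

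First I would use the invariance $\hat\Pi \hat p = \hat p$ for $\hat p \in Q^k(\hat K)$ to write, for arbitrary $\hat p \in Q^k(\hat K)$,
\[
\hat u - \hat\Pi \hat u = (\hat u - \hat p) - \hat\Pi(\hat u - \hat p),
\]
and combine with the continuity hypothesis $\|\hat\Pi \hat v\|_{k+1,\hat K} \leq C \|\hat v\|_{k+1,\hat K}$ to obtain
\[
\|\hat u - \hat\Pi \hat u\|_{k+1,\hat K} \leq (1+C)\,\inf_{\hat p \in Q^k(\hat K)} \|\hat u - \hat p\|_{k+1,\hat K}.
\]
Inequality \eqref{bh1} then reduces to the Deny-Lions-type bound
\[
\inf_{\hat p \in Q^k(\hat K)} \|\hat u - \hat p\|_{k+1,\hat K} \leq C\,[\hat u]_{k+1,\hat K}.
\]

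The key algebraic fact enabling the $Q^k$ version is that $Q^k(\hat K)$ equals the null space of $[\cdot]_{k+1,\hat K}$ on $H^{k+1}(\hat K)$. Indeed, $\partial^{k+1}_{x_1}\hat u \equiv 0$ forces $\hat u$ to be polynomial of degree at most $k$ in $x_1$ with coefficients depending on $x_2$, and combined with $\partial^{k+1}_{x_2}\hat u \equiv 0$, this yields $\hat u \in Q^k(\hat K)$. With this identification in hand, the Deny-Lions bound follows from a Peetre-Tartar / Rellich compactness argument on the quotient space $H^{k+1}(\hat K)/Q^k(\hat K)$: a contradiction sequence $\hat u_n$, normalized so that the quotient norm equals $1$ while $[\hat u_n]_{k+1,\hat K}\to 0$, is shown to possess a subsequence converging in the quotient norm to an element of $Q^k(\hat K)$, contradicting the normalization.

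The second assertion follows at once: since $l(\hat p) = 0$ for every $\hat p \in Q^k(\hat K)$,
\[
|l(\hat u)| = |l(\hat u - \hat p)| \leq \|l\|'_{k+1,\hat K}\,\|\hat u - \hat p\|_{k+1,\hat K}\quad \forall\,\hat p \in Q^k(\hat K),
\]
and taking the infimum over $\hat p$, then applying the Deny-Lions estimate, yields the stated dual bound. The main obstacle is the compactness argument itself: one must verify that the tensor-product semi-norm $[\cdot]_{k+1,\hat K}$, although missing the mixed partial derivatives of total order $k+1$, still controls the $H^{k+1}$-quotient norm modulo a lower-order (hence Rellich-compact) remainder, which is precisely what allows the contradiction subsequence to be extracted.
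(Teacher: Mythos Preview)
The paper does not supply its own proof of this theorem; it simply cites Exercise~3.1.1 and Theorem~4.1.3 of Ciarlet, so there is no argument in the text to compare your proposal against. Your reduction to a Deny--Lions estimate via the invariance of $\hat\Pi$ on $Q^k(\hat K)$, and your identification of $Q^k(\hat K)$ as the null space of $[\cdot]_{k+1,\hat K}$, are both correct and exactly the right framework; the corollary for linear functionals also follows as you describe.

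The gap is the compactness step itself, which you correctly flag as ``the main obstacle'' but do not resolve. In the classical $P^k$ case the Rellich argument works because $\|\hat u\|_{k+1}^2 = \|\hat u\|_k^2 + |\hat u|_{k+1}^2$, so once the contradiction sequence is strongly Cauchy in $H^k$ and $|\hat u_n|_{k+1}\to 0$, it is automatically strongly Cauchy in $H^{k+1}$. With $[\cdot]_{k+1}$ in place of $|\cdot|_{k+1}$ this identity fails: the mixed derivatives of order $k+1$ are controlled neither by $[\hat u]_{k+1}$ nor by $\|\hat u\|_k$, so you cannot upgrade weak $H^{k+1}$ convergence to strong convergence and the contradiction does not close. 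What is actually needed is the a~priori estimate $\|\hat u\|_{k+1,\hat K}\le C\big([\hat u]_{k+1,\hat K}+\|\hat u\|_{k,\hat K}\big)$ on the rectangle, which is true but is an additional ingredient (provable, for instance, by a reflection--extension argument combined with the Fourier inequality $|\xi|^{2(k+1)}\le C(\xi_1^{2(k+1)}+\xi_2^{2(k+1)})$, or by iterated one-dimensional intermediate-derivative estimates). Once that inequality is in hand, your Peetre--Tartar/Rellich outline goes through verbatim; without it, the step ``which is precisely what allows the contradiction subsequence to be extracted'' is an assertion rather than an argument.
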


By applying Bramble-Hilbert Lemma, we have the following quadrature estimates. 
\begin{lemma}\label{quaderror-theorem}
For  a sufficiently smooth function $a \in H^{2}(e)$,  we have 
\begin{align}
\int_e a d \mathbf{x}-\int_e ad^h \mathbf{x} =&  \mathcal{O}\left(h^{2+\frac{d}{2}}\right)[a]_{2, e}=\mathcal{O}\left(h^{2+d}\right)[a]_{2, \infty, e}\\
\int_e a d \mathbf{x}-\int_e \bar a_e d \mathbf{x} =& \mathcal{O}\left(h^{2+\frac{d}{2}}\right)[a]_{2, e}=\mathcal{O}\left(h^{2+d}\right)[a]_{2, \infty, e}
\end{align}
\end{lemma}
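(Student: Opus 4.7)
The plan is to reduce both bounds to the reference element $\hat K$ via the affine map $\mathbf F_e$ (which, on a uniform rectangular mesh, is simply the translation-plus-dilation $\mathbf F_e(\hat{\mathbf x}) = h\hat{\mathbf x} + \mathbf x_e$), apply Theorem \ref{bh-lemma} there, and then scale back.

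First I would introduce the two reference-element error functionals
\[
\hat E_1(\hat a) := \int_{\hat K}\hat a\, d\hat{\mathbf x} - \int_{\hat K}\hat a\, d^h_{\boldsymbol\lambda}\hat{\mathbf x},
\qquad
\hat E_2(\hat a) := \int_{\hat K}\hat a\, d\hat{\mathbf x} - \hat a\bigl(\tfrac12,\tfrac12\bigr),
\]
so that, using $\bar a_e = a(\mathrm{center}(e)) = \hat a(\tfrac12,\tfrac12)$ and $|e|=h^d$, the left-hand sides of the two displayed lines in the lemma equal $h^d\hat E_1(\hat a)$ and $h^d\hat E_2(\hat a)$ respectively (the factor $h^d$ appears through the Jacobian absorbed in the definition \eqref{mixed-quad}, consistently with the earlier identity $E(f)=h^d\hat E(\hat f)$). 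I would then check that both $\hat E_i$ vanish on $Q^1(\hat K)$: the one-dimensional trapezoid and midpoint rules are each exact on $P^1$, so their convex combination in \eqref{mixed-quad-1d} is exact on $P^1$, and its tensor product is therefore exact on $Q^1(\hat K)$; the one-point centroid rule likewise integrates the basis $\{1,\hat x_1,\hat x_2,\hat x_1\hat x_2\}$ exactly. Continuity of each $\hat E_i$ on $H^2(\hat K)$ is immediate from the Sobolev embedding $H^2(\hat K)\hookrightarrow C(\hat K)$ (valid for $d\le 3$), which controls all the point evaluations. Theorem \ref{bh-lemma} with $k=1$ then yields
\[
|\hat E_i(\hat a)| \le C\,[\hat a]_{2,\hat K},\qquad i=1,2,\quad \forall\,\hat a\in H^2(\hat K).
\]

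Next I would transfer to the physical element using scalings obtained from the chain rule and the change-of-variables $d\mathbf x = h^d\,d\hat{\mathbf x}$:
\[
E_i(a) = h^d\hat E_i(\hat a),\qquad [\hat a]_{2,\hat K} = h^{2-d/2}[a]_{2,e},\qquad [\hat a]_{2,\infty,\hat K} = h^2[a]_{2,\infty,e}.
\]
Combining with the reference bound gives $|E_i(a)| \le C h^{2+d/2}[a]_{2,e}$, which is the first equality in each line. For the $L^\infty$ form, I would use $[\hat a]_{2,\hat K}\le |\hat K|^{1/2}[\hat a]_{2,\infty,\hat K}$ (since $\hat K$ has finite measure) to replace the reference-level bound by $|\hat E_i(\hat a)| \le C[\hat a]_{2,\infty,\hat K}$; scaling this version yields the second equality $|E_i(a)| \le C h^{2+d}[a]_{2,\infty,e}$.

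There is no genuine obstacle here: the argument is a textbook scaling-plus-Bramble--Hilbert computation. The only mild subtlety worth recording is that the constant $C$ must be independent of the quadrature parameters $\boldsymbol\lambda=(\lambda^1,\lambda^2)$; this is automatic because the weights $\hat\omega_p^j$ in \eqref{mixed-weights-pts} are each bounded by $1$ uniformly for $\lambda^j\in[0,1]$, so the $H^2(\hat K)$-operator norm of $\hat E_1$ is uniformly controlled. The single real \emph{observation} in the proof is that the mixed quadrature retains $Q^1$-exactness for every admissible convex combination of the trapezoid and midpoint rules, which is exactly what buys the extra order beyond the naïve $P^1$-exactness and thus the $h^{2+d/2}$ rate.
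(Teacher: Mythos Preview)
Your proposal is correct and follows essentially the same route as the paper: reduce to the reference element, verify $Q^1$-exactness of the quadrature functional, invoke continuity on $H^2(\hat K)$ via the Sobolev embedding, apply the Bramble--Hilbert lemma (Theorem~\ref{bh-lemma}) with $k=1$, and scale back. Your write-up is in fact more thorough than the paper's, which treats only the first estimate explicitly and leaves the second (the $\bar a_e$ line) implicit; your observation about uniformity of the constant in the quadrature parameters $\boldsymbol\lambda$ is also a welcome addition not mentioned in the paper.
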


\begin{proof}
For any $\hat{f} \in H^2(\hat{K})$, since quadrature are represented by point values, with the Sobolev's embedding we have
$$
|\hat{E}(\hat{f})| \leq C|\hat{f}|_{0, \infty, \hat{K}} \leq C\|\hat{f}\|_{2, 2, \hat{K}}
$$
Therefore $\hat{E}(\cdot)$ is a continuous linear form on $H^2(\hat{K})$ and $\hat{E}(\hat{f})=0$ if $\hat{f} \in Q^1(\hat{K})$.  Then the Bramble-Hilbert lemma implies
$$
|E(a)|=h^d|\hat{E}(\hat{a})| \leq C h^d[\hat{a}]_{2, 2, \hat{K}}=\mathcal{O}\left(h^{2+\frac{d}{2}}\right)[a]_{2, 2, e}=\mathcal{O}\left(h^{2+d}\right)[a]_{2, \infty, e}
$$
\end{proof}

\begin{lemma}\label{rhs-quad-error}
If $f \in H^{2}(\Omega)$, $\forall v_h \in V^h$, we have
$$
\left(f, v_h\right)-\left\langle f, v_h\right\rangle_h=\mathcal{O}\left(h^{2}\right)\|f\|_{2}\left\|v_h\right\|_1.
$$
\end{lemma}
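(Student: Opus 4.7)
The plan is to localize the quadrature error to each mesh element, pull back to the reference cell $\hat{K}$, and apply the dual form of the Bramble--Hilbert lemma (Theorem~\ref{bh-lemma}) to the quadrature error functional. Writing
$$
(f,v_h) - \langle f,v_h\rangle_h = \sum_{e\in\Omega_h} E_e(fv_h), \qquad |E_e(fv_h)| = h^d\bigl|\hat E(\hat f \hat v_h)\bigr|,
$$
with $\hat f = f\circ\mathbf F_e$ and $\hat v_h = v_h\circ\mathbf F_e$, reduces the problem to a single estimate on $\hat{K}$. Since the trapezoid and midpoint rules each integrate linear polynomials exactly, the tensor-product mixed quadrature \eqref{mixed-quad-ref} is exact on $Q^1(\hat K)$. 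Combined with the two-dimensional Sobolev embedding $H^2(\hat K)\hookrightarrow C^0(\hat K)$, $\hat E$ is a continuous linear form on $H^2(\hat K)$ that vanishes on $Q^1(\hat K)$, so Theorem~\ref{bh-lemma} with $k=1$ yields
$$
|\hat E(\hat g)|\leq C\,[\hat g]_{2,\hat K},\qquad \forall\,\hat g\in H^2(\hat K).
$$

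Next the plan is to apply this bound with $\hat g=\hat f\hat v_h$ and control $[\hat f\hat v_h]_{2,\hat K}$ via the Leibniz rule. Because $\hat v_h\in Q^1(\hat K)$ is linear in each reference coordinate separately, the pure second derivatives $\partial_i^2\hat v_h$ vanish identically for $i=1,2$, so
$$
\partial_i^2(\hat f\hat v_h) = (\partial_i^2\hat f)\hat v_h + 2(\partial_i\hat f)(\partial_i\hat v_h),
$$
and therefore
$$
[\hat f\hat v_h]_{2,\hat K}\leq C\bigl(\|\hat v_h\|_{\infty,\hat K}\,[\hat f]_{2,\hat K} + \|\nabla\hat v_h\|_{\infty,\hat K}\,[\hat f]_{1,\hat K}\bigr).
$$
Norm equivalence on the finite-dimensional space $Q^1(\hat K)$ bounds $\|\hat v_h\|_\infty$ by $\|\hat v_h\|_{0,\hat K}$ and $\|\nabla\hat v_h\|_\infty$ by $|\hat v_h|_{1,\hat K}$. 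The standard affine scalings $|\hat f|_{k,\hat K}=h^{k-d/2}|f|_{k,e}$ and $|\hat v_h|_{k,\hat K}=h^{k-d/2}|v_h|_{k,e}$ on the uniform rectangular mesh then assemble cleanly into the per-element bound $|E_e(fv_h)|\leq Ch^2\|f\|_{2,e}\|v_h\|_{1,e}$.

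The final step is to sum over elements and apply Cauchy--Schwarz in the discrete $\ell^2$ sense,
$$
|(f,v_h)-\langle f,v_h\rangle_h|\leq Ch^2\Bigl(\sum_e\|f\|_{2,e}^2\Bigr)^{1/2}\Bigl(\sum_e\|v_h\|_{1,e}^2\Bigr)^{1/2} = Ch^2\|f\|_2\|v_h\|_1.
$$
The main obstacle is exactly the Leibniz step: for generic quadrature parameters $\boldsymbol{\lambda}_e$ the mixed rule is only exact on $Q^1(\hat K)$ and not on $Q^2(\hat K)$, so the integrand $\hat f\hat v_h$ is \emph{not} in the kernel of $\hat E$ and Bramble--Hilbert cannot be applied to it directly. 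It is precisely the structural identity $\partial_i^2\hat v_h\equiv 0$ on $Q^1(\hat K)$ that prevents a term $\hat f\,\partial_i^2\hat v_h$ from surviving in $\partial_i^2(\hat f\hat v_h)$; such a term would force a factor $[\hat v_h]_{2,\hat K}$ with no compensating power of $h$ and would degrade the overall estimate from $O(h^2)$ to $O(h)$. Recovering the expected second-order rate therefore hinges on this cancellation between the $Q^1$ structure of the test space and the $Q^1$ accuracy of the mixed quadrature.
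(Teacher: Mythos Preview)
Your proposal is correct and follows essentially the same route as the paper: localize, pull back to $\hat K$, apply the $Q^1$ Bramble--Hilbert lemma to the quadrature functional $\hat E$ to get $|\hat E(\hat f\hat v_h)|\le C[\hat f\hat v_h]_{2,\hat K}$, expand via Leibniz using $\partial_i^2\hat v_h\equiv 0$, pass from $L^\infty$ to $L^2$ norms of $\hat v_h$ by finite-dimensional norm equivalence, scale back, and sum. The paper's own proof is exactly this, only more tersely written; your added justification of the continuity of $\hat E$ via the embedding $H^2(\hat K)\hookrightarrow C^0(\hat K)$ and the explicit Cauchy--Schwarz step when summing over elements are details the paper leaves implicit.
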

\begin{proof}
Applying Theorem \ref{bh-lemma}, on element $e$, with $\frac{\partial^2 \hat{v}_h}{\partial^2 \hat{x}_i}$ vanish, we obtain:
\begin{equation*}
\begin{aligned}
& E(fv) = h^{d}\hat{E}(\hat f \hat v_h)
\leq C h^{d} [\hat f \hat v_h]_{2, 2,\hat{K}} \\
\leq & C h^{d}\left( |\hat f|_{2, 2, \hat{K}} |\hat v_h|_{0, \infty, \hat{K}} + |\hat f|_{1, 2, \hat{K}} |\hat v_h|_{1, \infty, \hat{K}}\right) \\
\leq & C h^{d}\left( |\hat f|_{2, 2, \hat{K}} |\hat v_h|_{0, 2, \hat{K}} + |\hat f|_{1, 2, \hat{K}} |\hat v_h|_{1, 2, \hat{K}}\right) \\
\leq & C h^{2}\left( |f|_{2, 2, e} | v_h|_{0, 2, e} + |f|_{1, 2, e} |v_h|_{1, 2, e} \right)
= \mathcal{O}\left(h^{2}\right)\|f\|_{2,e}\left\|v_h\right\|_{1,e}.
\end{aligned}
\end{equation*}
By sum the above result over all elements of $\Omega_h$, then we conclude with
$$
\left(f, v_h\right)-\left\langle f, v_h\right\rangle_h=\mathcal{O}\left(h^{2}\right)\|f\|_{2}\left\|v_h\right\|_1.
$$
\end{proof}

\begin{lemma}\label{2nd-order-bh-cor}
If $u \in H^3(e)$,  for $i,j=1,2$, then $\forall v_h$, $$\int_e u_{x_i} (v_h)_{x_j}d\mathbf{x}-\int u_{x_i} (v_h)_{x_j}d_{\boldsymbol{\lambda}_e}^h\mathbf{x}=\mathcal{O}\left(h^{2}\right)\|u\|_{3,e}\left\|v_h\right\|_{2,e}.$$
\end{lemma}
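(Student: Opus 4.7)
The plan is to transfer the estimate to the reference element, apply the Bramble--Hilbert lemma to the quadrature error functional, and then scale back. Let $\hat{u}=u\circ\mathbf{F}_e$ and $\hat{v}_h = v_h\circ\mathbf{F}_e$. On a uniform rectangular mesh with size $h$, the chain rule gives $u_{x_i}(\mathbf{x})=h^{-1}\hat{u}_{\hat{x}_i}(\hat{\mathbf{x}})$, and similarly for $v_h$. Pulling back both the exact integral and the quadrature to $\hat{K}$ and accounting for the Jacobian $h^d$, one obtains
\begin{equation*}
E\bigl(u_{x_i}(v_h)_{x_j}\bigr)\;=\;h^{d-2}\,\hat{E}\bigl(\hat{u}_{\hat{x}_i}\hat{v}_{h,\hat{x}_j}\bigr),
\end{equation*}
so for $d=2$ it suffices to bound $|\hat{E}(\hat{u}_{\hat{x}_i}\hat{v}_{h,\hat{x}_j})|$ on the reference element.

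The functional $\hat{f}\mapsto\hat{E}(\hat{f})$ is continuous on $H^2(\hat{K})$ (point-value weights and Sobolev embedding), and vanishes on $Q^1(\hat{K})$ by construction of the mixed quadrature \eqref{mixed-quad-ref}. Applying Theorem \ref{bh-lemma} with $k=1$ to $\hat{g}:=\hat{u}_{\hat{x}_i}\hat{v}_{h,\hat{x}_j}$ yields
\begin{equation*}
\bigl|\hat{E}(\hat{g})\bigr|\;\le\;C\,[\hat{g}]_{2,\hat{K}}.
\end{equation*}
Since $\hat{v}_h\in Q^1(\hat{K})$, all pure third partial derivatives of $\hat{v}_h$ vanish, so Leibniz gives
\begin{equation*}
\partial_{\hat{x}_k}^2\hat{g}\;=\;\hat{u}_{\hat{x}_i\hat{x}_k\hat{x}_k}\hat{v}_{h,\hat{x}_j}\;+\;2\,\hat{u}_{\hat{x}_i\hat{x}_k}\hat{v}_{h,\hat{x}_j\hat{x}_k},
\end{equation*}
with no term containing $\hat{v}_{h,\hat{x}_j\hat{x}_k\hat{x}_k}$. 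Using the equivalence of norms on the finite-dimensional space $Q^1(\hat{K})$ to pass from $L^\infty$ to $L^2$ bounds on derivatives of $\hat{v}_h$, I arrive at
\begin{equation*}
[\hat{g}]_{2,\hat{K}}\;\le\;C\bigl(|\hat{u}|_{3,\hat{K}}\,|\hat{v}_h|_{1,\hat{K}}\;+\;|\hat{u}|_{2,\hat{K}}\,|\hat{v}_h|_{2,\hat{K}}\bigr).
\end{equation*}

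Finally I apply the standard scaling $|\hat{w}|_{k,\hat{K}}=h^{k-d/2}|w|_{k,e}$ for $d=2$: the first summand scales as $h^2|u|_{3,e}\cdot|v_h|_{1,e}$ and the second as $h|u|_{2,e}\cdot h|v_h|_{2,e}$, each contributing $h^2\|u\|_{3,e}\|v_h\|_{2,e}$. Combining with the identity from the first paragraph produces the desired $\mathcal{O}(h^2)\|u\|_{3,e}\|v_h\|_{2,e}$ bound. The only subtle step is the Leibniz expansion: one must recognize that the $Q^1$ structure kills the potentially dangerous third-derivative term on $\hat{v}_h$, which is what keeps the right-hand side limited to $\|v_h\|_{2,e}$ rather than a higher norm that would be meaningless for $Q^1$ functions on a physical element.
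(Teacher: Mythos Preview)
Your argument is correct and follows the same route as the paper: transfer to the reference element via $E=h^{d-2}\hat E$, apply the Bramble--Hilbert lemma (Theorem~\ref{bh-lemma}) to bound $\hat E$ by $[\hat u_{\hat x_i}\hat v_{h,\hat x_j}]_{2,\hat K}$, use Leibniz together with the vanishing of third derivatives of $\hat v_h\in Q^1(\hat K)$ and norm equivalence on $Q^1(\hat K)$, then scale back. The only cosmetic difference is that the paper first lists all three Leibniz terms and then drops the one containing $|(\hat v_h)_{\hat x_j}|_{2,\infty,\hat K}$, whereas you suppress that term from the start.
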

\begin{proof}
Applying Theorem \ref{bh-lemma}, we obtain:
\begin{equation*}
\begin{aligned}
& E(u_{x_i} (v_h)_{x_j}) = h^{d-2}\hat{E}(\hat u_{\hat x_i} (\hat v_h)_{\hat x_j})
\leq C h^{d-2} [\hat u_{\hat x_i} (\hat v_h)_{\hat x_j}]_{2, 2,\hat{K}} \\
\leq & C h^{d-2}\left( |\hat u_{\hat x_i}|_{2, 2, \hat{K}} |(\hat v_h)_{\hat x_j}|_{0, \infty, \hat{K}} + |\hat u_{\hat x_i}|_{1, 2, \hat{K}} |(\hat v_h)_{\hat x_j}|_{1, \infty, \hat{K}} + |\hat u_{\hat x_i}|_{0, 2, \hat{K}} |(\hat v_h)_{\hat x_j}|_{2, \infty, \hat{K}}\right) \\
\leq & C h^{d-2}\left( |\hat u_{\hat x_i}|_{2, 2, \hat{K}} |(\hat v_h)_{\hat x_j}|_{0, 2, \hat{K}} + |\hat u_{\hat x_i}|_{1, 2, \hat{K}} |(\hat v_h)_{\hat x_j}|_{1, 2, \hat{K}} + |\hat u_{\hat x_i}|_{0, 2, \hat{K}} |(\hat v_h)_{\hat x_j}|_{2, 2, \hat{K}}\right)\\
\leq & C h^{d-2}\left(|\hat u|_{3, 2, \hat{K}} |\hat v_h|_{1, 2, \hat{K}} + |\hat u|_{2, 2, \hat{K}} |\hat v_h|_{2, 2, \hat{K}} \right).
\end{aligned}
\end{equation*}
where the second last inequality is implied by the equivalence of norms over $Q^1(\hat{K})$ and in the last inequality we use the fact that the third derivative of $Q^1$ polynomial vanish.

Therefore,
\begin{equation*}
\begin{aligned}
E(u_{x_i} (v_h)_{x_j}) \leq C h^{2}\left(| u|_{3, 2, e} |v_h|_{1, 2, e} + |u|_{2, 2, e} |v_h|_{2, 2, e} \right) = \mathcal{O}\left(h^{2}\right)\|u\|_{3,e}\left\|v_h\right\|_{2,e}.
\end{aligned}
\end{equation*}

\end{proof}

\begin{lemma}\label{1st-order-rhs-quad-error}
If $f \in H^2(\Omega)$ or $f \in V^h$, $\forall v_h$, we have
$$\left(f, v_h\right)-\left\langle f, v_h\right\rangle_h=\mathcal{O}\left(h\right)\|f\|_2\left\|v_h\right\|_0.$$
\end{lemma}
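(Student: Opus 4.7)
The plan is to derive this estimate as an easy consequence of Lemma~\ref{rhs-quad-error} combined with the inverse inequality $\|v_h\|_{1,e}\leq C h^{-1}\|v_h\|_{0,e}$ that is recorded in the preliminaries. Concretely, I would first dispose of the case $f\in H^2(\Omega)$: Lemma~\ref{rhs-quad-error} gives directly
\[
(f,v_h)-\langle f,v_h\rangle_h=\mathcal O(h^2)\,\|f\|_2\,\|v_h\|_1,
\]
and squaring and summing the elementwise inverse inequality over $\Omega_h$ yields $\|v_h\|_1\leq C h^{-1}\|v_h\|_0$. Multiplying these two bounds produces the claimed $\mathcal O(h)\,\|f\|_2\,\|v_h\|_0$.

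For the second case $f\in V^h$, the point to check is that Lemma~\ref{rhs-quad-error} still applies with $\|f\|_2$ interpreted as the broken $H^2$ norm (which is finite because $f$ is a $Q^1$ polynomial on each element, even though $f\notin H^2(\Omega)$ globally). I would revisit the proof of Lemma~\ref{rhs-quad-error}: it bounds the global quadrature error by summing the elementwise estimate
\[
|E(fv_h)|\leq C h^2\bigl(|f|_{2,e}|v_h|_{0,e}+|f|_{1,e}|v_h|_{1,e}\bigr),
\]
which is obtained from Bramble--Hilbert applied on the reference cell to $\hat f\hat v_h$ together with the vanishing of $\partial_{\hat x_i}^2\hat v_h$. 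This argument only requires $f$ to lie in $H^2(e)$ on each element, so it transfers verbatim to $f\in V^h$ with the broken norm $\|f\|_{2,\Omega}=(\sum_e\|f\|_{2,e}^2)^{1/2}$. Then the same inverse-inequality step closes the estimate.

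There is no real obstacle: the proof is essentially a one-line combination of a previous lemma and the inverse estimate. The only points requiring a word of justification are (i) that the inverse inequality for $v_h\in V^h$ can be summed to a global bound, and (ii) that the elementwise structure of the Bramble--Hilbert argument in Lemma~\ref{rhs-quad-error} makes it valid for broken-$H^2$ functions such as those in $V^h$. Both are immediate from the definitions, so I would state them in one or two sentences rather than redoing the calculation.
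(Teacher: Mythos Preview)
Your proposal is correct and follows essentially the same approach as the paper: invoke the elementwise $\mathcal{O}(h^2)\|f\|_{2,e}\|v_h\|_{1,e}$ bound from Lemma~\ref{rhs-quad-error} and trade one power of $h$ for $\|v_h\|_{1}\to\|v_h\|_0$ via the inverse estimate. The paper applies the inverse inequality elementwise before summing, whereas you apply it globally after summing, but the two are equivalent; you are also slightly more explicit than the paper in justifying the $f\in V^h$ case via the broken $H^2$ norm.
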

\begin{proof}
As in the proof of Lemma \ref{rhs-quad-error}, we have
\begin{equation*}
\begin{aligned}
E(fv) = \mathcal{O}\left(h^{2}\right)\|f\|_{2,e}\left\|v_h\right\|_{1,e}.
\end{aligned}
\end{equation*}
By applying the inverse estimate to polynomial $v_h$, we have
$$
E(fv) = \mathcal{O}\left(h\right)\|f\|_{2,e}\left\|v_h\right\|_{0,e}.
$$
Summing the previous result across all elements in $\Omega_h$, we conclude:
$$
\left(f, v_h\right)-\left\langle f, v_h\right\rangle_h=\mathcal{O}\left(h\right)\|f\|_{2}\left\|v_h\right\|_0.
$$
\end{proof}

\section{The $Q^1$ finite element method and its monotonicity}\label{sec-Q1-scheme}
In this section, we first derive the $Q^1$ finite element scheme then pursue its monotonicity.
\subsection{Derivation of the scheme}
For problem \eqref{elli-pde-1}, assuming there is a function $\bar{g} \in H^1(\Omega)$ as an extension of $g$ so that $\left.\bar{g}\right|_{\partial \Omega}=g$, the  variational form  of \eqref{elli-pde-1} is to find $\tilde{u}=u-\bar{g}\in H_0^1(\Omega)$ satisfying
\begin{equation}\label{hom-var}
 \mathcal A(u, v)=(f,v)- \mathcal A(\bar{g}, v) ,\quad \forall v\in H_0^1(\Omega),
 \end{equation}
 where $\mathcal A(u,v)=\int_{\Omega} \mathbf{a} \nabla u \cdot \nabla v d\mathbf{x}+\int_{\Omega} c u v d\mathbf{x}$, $ (f,v)=\int_{\Omega}fv d\mathbf{x}$.
 
 Let $V_0^h \subseteq H_0^1(\Omega)$ be the continuous finite element space consisting of piece-wise $Q^1$ polynomials. To have a second-order monotone method, we first approximate the matrix coefficients $\mathbf{a}=(a^{ij}(\mathbf{x}))$ by either its average $\frac{1}{meas(e)}\int_e \mathbf{a} d\mathbf{x}$ or its middle point value on each element $e$. The approximation is denoted by $\mathbf{\bar{a}}_e$. Then we obtain the modified bilinear form $$\mathcal{\bar{A}}(u, v)=\int_{\Omega} \mathbf{\bar{a}} \nabla u \cdot \nabla v d\mathbf{x}+\int_{\Omega} c u v d\mathbf{x},$$
where $\mathbf{\bar{a}}=\left(\mathbf{\bar{a}}_e\right)_{e \in \Omega_h}$. In practice, we take $\mathbf{\bar{a}}_e$ to be the middle point value of $\mathbf{a}$ on element $e$ for smooth enough $\mathbf{a}$ and fine enough mesh $\Omega_h$.

By approximating integrals in $\mathcal{\bar{A}}( u, v)$ with quadrature specified in \eqref{mixed-quad-domain}, along with designated quadrature parameter $\boldsymbol{\lambda}_{\Omega}$, we derive the following numerical scheme:  find $ u_h \in V_0^h$ satisfying
\begin{equation}\label{hom-var-num-quad}
\mathcal A_h( u_h, v_h)=\langle f,v_h \rangle_h - A_h( g_I, v_h),\quad \forall v_h\in V_0^h,
\end{equation}
where the approximated bilinear form is defined as 
\begin{equation}\label{bilinear-form-appr}
\mathcal A_h( u_h, v_h): =\int_{\Omega} \mathbf{\bar{a}} \nabla u_h \cdot \nabla v_h d^h_{\boldsymbol{\lambda}}\mathbf{x} + \int_{\Omega} c u_h  v_h d^h_{1}\mathbf{x}.
\end{equation}
The right hand side is defined as 
\begin{equation}\label{rhs-appr}
\langle f,v_h \rangle_h: =\int_{\Omega} f  v_h d^h_{1}\mathbf{x},
\end{equation}
and  $g_I \in V^h$ is the piece-wise $Q^{1}$ Lagrangian interpolation polynomial of the following function:
$$
g(x, y)=\left\{\begin{array}{lll}
0, & \text { if } & (x, y) \in(0,1)^2 \\
g(x, y), & \text { if } & (x, y) \in \partial \Omega .
\end{array}\right.
$$
Then $\bar{u}_h=u_h+g_I$ is the numerical solution for the problem \eqref{elli-pde-1}.

Obviously the quadrature parameters $\boldsymbol{\lambda} = (\lambda^1, \lambda^2)$ on each element are to be determined  for the  quadrature \eqref{mixed-quad}. It is not obvious that the numerical solution $\bar{u}_h$ is an accurate approximation of the exact solution $u$ as $\mathbf{\bar{a}}$ varies depending on the mesh.

Let us denote $\mathbf{f}$ the vector consisting of $f_{i} = f(\mathbf{x}_i)$ for $i =1,\dots,N_h$ and $\bar{\mathbf{f}}$ an abstract vector consisting of $f_{i}$ for $i =1,\dots,N_h$ and the boundary condition $g_{i} = g(\mathbf{x}_i)$ at the boundary grid points $i =N_h+1,\dots,N_h+N_h^{\partial}$. Besides, we denote 
$\bar{\mathbf{u}} = (u_{1}, \dots, u_{N_h+N_h^{\partial}})$ the vector such that $$\bar{u}_h = \sum_{i=1}^{N_h+N_h^{\partial}}u_{i}\varphi_i.$$
Then scheme \eqref{hom-var-num-quad} can be written as a finite difference scheme \cite{li2020superconvergence}, with the matrix vector representation $\bar{A} \bar{\mathbf{u}}= M\mathbf{f}$ where $\bar{A} = (a_{ij})_{N_h\times (N_h+N_h^{\partial})}$, $ a_{ij}= \mathcal A_h( \varphi_j, \varphi_i)$, $i=1,\dots, N_h$, $j=1,\dots, N_h+N_h^{\partial}$, and $M$ is the lumped mass matrix. For convenience, after inverting the lumped mass matrix $M$, with the boundary conditions, the whole scheme can be represented in a matrix vector form 
\begin{equation}\label{q1-fem-fd-form}
\bar{L}_h \bar{\mathbf{u}}=\bar{\mathbf{f}},
\end{equation}
where 
\begin{align*}
\left(\bar{L}_h \bar{\mathbf{u}}\right)_{i}:= & \left(M^{-1}\bar{A} \bar{\mathbf{u}}\right)_{i} = f_i, \quad     i =1,\dots,N_h, \\
\left(\bar{L}_h \bar{\mathbf{u}}\right)_{i}:= & u_i= g_i, \quad     i =N_h+1,\dots,N_h+N_h^{\partial}.
\end{align*}

\subsection{Discrete maximum principle}
In this subsection, we review how the monotonicity implies the discrete maximum principle. For the matrix form \eqref{q1-fem-fd-form} of the scheme \eqref{hom-var-num-quad}, with
$$
\mathbf{u}=\left(u_1, \dots , u_{N_h}\right)^T, \quad
\mathbf{u}^{\partial}=\left(
u_{N_h+1}, \dots, u_{N_h+N_h^{\partial}}
\right)^T, \quad
\bar{\mathbf{u}}=\left(
u_1, \dots, u_{N_h+N_h^{\partial}}\right)^T,
$$
% A finite difference scheme can be written as
% $$
% \begin{aligned}
% \mathcal{L}_h(\tilde{\mathbf{u}})_i=\sum_{j=1}^{N_h} b_{i j} u_j+\sum_{j=N_h+1}^{N_h+N^{\partial}} b_{i j}^{\partial} u_j=& f_i, \quad 1 \leq i \leq N_h, \\
% u_i=& g_i, \quad N_h+1 \leq i \leq N_h+N_h^{\partial}.
% \end{aligned}
% $$
we have the finite difference operator $\mathcal{L}_h$ defined by $\bar{L}_h$
$$
\mathcal{L}_h (\bar{\mathbf{u}}):= \bar{L}_h \bar{\mathbf{u}}=\bar{\mathbf{f}}, \quad \bar{L}_h=\left(\begin{array}{cc}
L_h & B^{\partial} \\
0 & I
\end{array}\right), \, \bar{\mathbf{u}}=\binom{\mathbf{u}}{\mathbf{u}^{\partial}},\,  \bar{\mathbf{f}}=\binom{\mathbf{f}}{\mathbf{g}} .
$$
The discrete maximum principle is
\begin{equation}\label{dmp}
\bar{L}_h (\bar{\mathbf{u}})_i \leq 0, \, 1 \leq i \leq N_h \Longrightarrow \max _i u_i \leq \max \left\{0, \max_{N_h+1\leq i\leq N_h+N_h^{\partial}} u_i\right\}.
\end{equation}

The following result was proven in \cite{ciarlet1970discrete}:
\begin{theorem}\label{them-dmp}
A finite difference operator $\mathcal{L}_h$ satisfies the discrete maximum principle \eqref{dmp} if $\bar{L}_h^{-1} \geq 0$ and all row sums of $\bar{L}_h$ are non-negative.
\end{theorem}

\subsection{Monotonicity of the $Q^1$ finite element}
To have the monotonicity, we are interested in conditions for $\bar{L}_h$ being an $M$-matrix. Recall a sufficient condition for $M$-matrix, see condition $C_{10}$ in \cite{plemmons1977m}:
\begin{lemma}
For a real irreducible square matrix $A$ with positive diagonal entries and non-positive off-diagonal entries, $A$ is a nonsingular $M$-matrix if all the row sums of A are non-negative and at least one row sum is positive.
\end{lemma}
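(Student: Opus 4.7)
The plan is to reduce the statement to a Perron--Frobenius estimate. First I would split $A = D - B$, where $D := \diag(A) > 0$ and $B \geq 0$ coincides with $-A$ off the diagonal and is zero on the diagonal; both parts are well-defined by the sign assumptions. Factoring out $D$ gives $A = D(I - P)$ with $P := D^{-1} B \geq 0$, and $P$ inherits irreducibility from $A$ because $B$ has the same off-diagonal zero pattern as $A$. The row-sum hypothesis on $A$ translates immediately to $P\mathbf{1} \leq \mathbf{1}$ componentwise (where $\mathbf{1}$ is the all-ones vector), with strict inequality in at least one coordinate corresponding to the row of $A$ whose sum is strictly positive.

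The core of the proof is to promote this to the strict spectral estimate $\rho(P) < 1$. I would apply the Perron--Frobenius theorem to the irreducible nonnegative matrix $P^T$ to obtain a strictly positive left eigenvector $v > 0$ satisfying $P^T v = \rho(P)\, v$. Pairing $v$ with the inequality $P\mathbf{1} \leq \mathbf{1}$ then yields
\[
\rho(P)\, v^T \mathbf{1} \;=\; v^T (P\mathbf{1}) \;<\; v^T \mathbf{1},
\]
where strictness is guaranteed because at least one entry of $P\mathbf{1}$ is less than $1$ and $v$ is strictly positive in every coordinate. Dividing by $v^T \mathbf{1} > 0$ gives $\rho(P) < 1$.

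Once $\rho(P) < 1$ is established, the Neumann series $(I - P)^{-1} = \sum_{k \geq 0} P^k$ converges to a matrix with nonnegative entries, so $A$ is invertible with $A^{-1} = (I - P)^{-1} D^{-1} \geq 0$. Combined with the sign pattern of $A$ this is exactly the definition of a nonsingular $M$-matrix. The only delicate step is the strict spectral bound: the weak estimate $\rho(P) \leq \max_i (P\mathbf{1})_i \leq 1$ is elementary (e.g.\ via Collatz--Wielandt), but upgrading it to a strict inequality from information about only a single row genuinely needs both the irreducibility of $A$, which guarantees strict positivity of the Perron eigenvector of $P^T$, and the assumption that at least one row sum is strictly positive, which makes $v^T(P\mathbf{1}) < v^T \mathbf{1}$ rather than just $\leq$.
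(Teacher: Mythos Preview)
Your argument is correct and complete: the splitting $A = D(I-P)$, the transfer of irreducibility to $P$, the pairing of the strictly positive left Perron vector of $P$ against the substochastic inequality $P\mathbf{1}\le\mathbf{1}$ to force $\rho(P)<1$, and the Neumann series conclusion are all sound. You were also right to flag that the strict inequality is the only delicate point and that it genuinely requires both irreducibility and at least one strictly positive row sum.

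The paper, however, does not prove this lemma at all. It is stated as a recalled classical fact, with a pointer to condition $C_{10}$ in Plemmons' list of equivalent characterizations of nonsingular $M$-matrices. So there is no ``paper's approach'' to compare against; you have supplied a self-contained Perron--Frobenius proof where the paper simply cites the literature. Your route is in fact the standard textbook argument behind that citation.
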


Then we have the following result on the uniform rectangular mesh. The stiffness matrix of \eqref{hom-var-num-quad} is denoted as $A = (a_{ij}) = (\mathcal A_h( \varphi_j, \varphi_i))$, $i,j=1,\dots, N_h$. 
\begin{theorem}\label{thm-stif-m-matr}
Assume $\forall e\in \Omega_h$, $|\bar{a}^{12}_e| \leq \min\{\bar{a}^{11}_e,\bar{a}^{22}_e\}$. Then for the $Q^1$ scheme given by \eqref{hom-var-num-quad} for the elliptic equation \eqref{elli-pde-1} on uniform rectangular mesh, the stiffness matrix and $\bar{L}_h$ are $M$-matrices and the finite difference operator defined by $\bar{L}_h$ satisfies discrete maximum principle, 
provided the quadrature parameters for each element $e$ are chosen as:
\begin{equation}\label{param-setting}
\lambda^1_e,  \,\lambda^2_e \in \left( \frac{|\bar{a}^{11}_e -\bar{a}^{22}_e|}{\bar{a}^{11}_e+\bar{a}^{22}_e}, 1 - \frac{2|\bar{a}^{12}_e|}{\bar{a}^{11}_e+\bar{a}^{22}_e}\right].
\end{equation}
When $|\bar{a}^{12}_e| = \min\{\bar{a}^{11}_e,\bar{a}^{22}_e\}$, \eqref{param-setting} means we take $\lambda^1_e,  \lambda^2_e $ to be the upper bound of the interval, i.e. $1 - \frac{2|\bar{a}^{12}_e|}{\bar{a}^{11}_e+\bar{a}^{22}_e}$.
\end{theorem}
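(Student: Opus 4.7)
The plan is to apply the sufficient $M$-matrix criterion from the preceding lemma: verify that the stiffness matrix $A$ has positive diagonal entries, non-positive off-diagonal entries, non-negative row sums with at least one strictly positive, and is irreducible. Because the mesh is uniform rectangular with every cell a translate of the reference one, everything reduces to computing in closed form the $4\times 4$ local matrix on a single cell $e$ and then assembling globally. After pulling back to $\hat K$ with Jacobian $J=hI$, the gradient block of the local matrix has entries
\[
\int_{\hat K}\!\Bigl[\bar a^{11}_e(\hat\phi_{\ell,m})_{\hat x_1}(\hat\phi_{\ell',m'})_{\hat x_1}+\bar a^{22}_e(\hat\phi_{\ell,m})_{\hat x_2}(\hat\phi_{\ell',m'})_{\hat x_2}+\bar a^{12}_e\bigl((\hat\phi_{\ell,m})_{\hat x_1}(\hat\phi_{\ell',m'})_{\hat x_2}+(\hat\phi_{\ell,m})_{\hat x_2}(\hat\phi_{\ell',m'})_{\hat x_1}\bigr)\Bigr]\,d^h_{\boldsymbol\lambda_e}\hat{\mathbf x}.
\]
Every product is at most quadratic in each variable, so the 9-point mixed rule at $\{0,\tfrac12,1\}^2$ with one-dimensional weights $(\tfrac{\lambda}{2},1-\lambda,\tfrac{\lambda}{2})$ evaluates all of them explicitly. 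For the reaction term, the trapezoid rule in both directions makes $\langle c\varphi_i,\varphi_j\rangle_h=0$ whenever $i\neq j$ since a nodal basis vanishes at every node but its own, so the reaction piece only adds a non-negative term to each diagonal entry.

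A short bookkeeping yields four families of local off-diagonal entries. The horizontal-edge pair evaluates to $\tfrac14[-\bar a^{11}_e(1+\lambda^2_e)+\bar a^{22}_e(1-\lambda^1_e)]$ and the vertical-edge pair to $\tfrac14[\bar a^{11}_e(1-\lambda^2_e)-\bar a^{22}_e(1+\lambda^1_e)]$; in both, the $\bar a^{12}$-cross-terms drop out because each cross product is a $Q^1$ polynomial integrating to $+\tfrac14$ or $-\tfrac14$ under the $Q^1$-exact mixed rule and the two contributions cancel. The two corner-opposite pairs evaluate to $-\tfrac14[\bar a^{11}_e(1-\lambda^2_e)+\bar a^{22}_e(1-\lambda^1_e)\pm 2\bar a^{12}_e]$. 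Requiring all four families to be $\leq 0$ collapses to the single two-sided condition
\[
|\bar a^{11}_e-\bar a^{22}_e|\ \leq\ \bar a^{11}_e\lambda^2_e+\bar a^{22}_e\lambda^1_e\ \leq\ \bar a^{11}_e+\bar a^{22}_e-2|\bar a^{12}_e|,
\]
which, when $\lambda^1_e=\lambda^2_e$, is exactly the interval in (\ref{param-setting}); any $(\lambda^1_e,\lambda^2_e)$ drawn coordinatewise from that interval satisfies both inequalities, and non-emptiness of the interval is equivalent to the hypothesis $|\bar a^{12}_e|\leq\min\{\bar a^{11}_e,\bar a^{22}_e\}$.

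Global assembly then finishes the verification. Each global off-diagonal $A_{ij}$ is a sum of one (corner-opposite neighbour) or two (edge neighbour) local entries of the types above and hence $\leq 0$, while each diagonal is a sum of four strictly positive local diagonals plus a non-negative reaction contribution. Gradient row sums vanish cell-by-cell because $\sum_{\ell,m}\hat\phi_{\ell,m}\equiv 1$, so the row sum of $A$ at an interior node $i$ reduces to $\langle c\varphi_i,1\rangle_h\geq 0$ minus the (non-positive) couplings to the Dirichlet-eliminated boundary nodes; this is strictly positive at any node adjacent to $\partial\Omega$. Finally, the \emph{strict} lower endpoint in (\ref{param-setting}) is what keeps every edge-neighbour entry strictly negative, so the sparsity graph of $A$ on the uniform grid is the full nine-point stencil, which is connected, giving irreducibility. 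The main obstacle is the bookkeeping: evaluating the mixed rule on the sixteen local integrals, recognising the $\bar a^{12}$-cancellation on edge-neighbour entries, and verifying that the four sign conditions collapse cleanly into the one two-sided interval of (\ref{param-setting}), with the upper endpoint becoming sharp exactly when $|\bar a^{12}_e|=\min\{\bar a^{11}_e,\bar a^{22}_e\}$.
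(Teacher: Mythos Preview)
Your proposal is correct and follows essentially the same route as the paper: compute the four families of local off-diagonal entries under the mixed quadrature, reduce their non-positivity to the two-sided inequality on $\bar a^{11}_e\lambda^2_e+\bar a^{22}_e\lambda^1_e$, observe that the reaction term (trapezoid in both directions) is diagonal and non-negative, and finish with row-sum and irreducibility checks. Your closed-form entries agree with the paper's, and your two-sided condition is exactly the paper's parameter window rewritten.

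One small inaccuracy: strict negativity of the \emph{edge}-neighbour entries (which is what the open lower endpoint guarantees) gives only the five-point stencil subgraph, not the full nine-point one---at the upper endpoint one of the corner couplings can vanish. This is harmless, since the five-point graph on the uniform grid is already connected, so irreducibility follows regardless; the paper does not spell this out either.
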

\begin{proof}
First, we consider the following quadrature approximation results on the reference element $\hat K$. With quadrature \eqref{mixed-quad-ref} and quadrature parameter $\boldsymbol{\lambda}_e = \left(\lambda_e^1, \lambda_e^2 \right)$, we have
\begin{align*}
\langle \mathbf{\bar{a}}\nabla \phi_{0,0},\nabla \phi_{0,1} \rangle_{h}=\langle \mathbf{\bar{a}}\nabla \phi_{1,1},\nabla \phi_{1,0}\rangle_{h} = -\frac{1}{4}(\lambda_e^2\bar{a}^{11}_e+\lambda_e^1\bar{a}^{22}_e)+\frac{1}{4}(\bar{a}^{11}_e-\bar{a}^{22}_e),\\
\langle \mathbf{\bar{a}}\nabla \phi_{0,0},\nabla \phi_{1,0} \rangle_{h}=\langle \mathbf{\bar{a}}\nabla \phi_{0,1},\nabla \phi_{1,1}\rangle_{h} = -\frac{1}{4}(\lambda_e^2\bar{a}^{11}_e+\lambda_e^1\bar{a}^{22}_e)+\frac{1}{4}(\bar{a}^{22}_e-\bar{a}^{11}_e),\\
\langle \mathbf{\bar{a}}\nabla \phi_{0,0},\nabla \phi_{1,1} \rangle_{h} = -\frac{1}{4}\left((1-\lambda_e^2)\bar{a}^{11}_e+(1-\lambda_e^1)\bar{a}^{22}_e\right) - \frac{1}{2}\bar{a}^{12}_e,\\
\langle \mathbf{\bar{a}}\nabla \phi_{0,1},\nabla \phi_{1,0} \rangle_{h} = -\frac{1}{4}\left((1-\lambda_e^2)\bar{a}^{11}_e+(1-\lambda_e^1)\bar{a}^{22}_e\right) + \frac{1}{2}\bar{a}^{12}_e.
\end{align*} 
With \eqref{param-setting} and the assumption $|\bar{a}^{12}_e| \leq \min\{\bar{a}^{11}_e,\bar{a}^{22}_e\}$, we have
\begin{equation}\label{local-stif-matr-val}
\begin{aligned}
\langle \mathbf{\bar{a}}\nabla \phi_{0,0},\nabla \phi_{0,1} \rangle_{h}=\langle \mathbf{\bar{a}}\nabla \phi_{1,1},\nabla \phi_{1,0}\rangle_{h} \in \left[\frac{1}{2}\left(|\bar{a}^{12}_e| - \bar{a}^{22}_e  \right), \frac{1}{4}(\bar{a}^{11}_e -\bar{a}^{22}_e-|\bar{a}^{11}_e -\bar{a}^{22}_e| \right ),\\
\langle \mathbf{\bar{a}}\nabla \phi_{0,0},\nabla \phi_{1,0} \rangle_{h}=\langle \mathbf{\bar{a}}\nabla \phi_{0,1},\nabla \phi_{1,1}\rangle_{h} \in \left[\frac{1}{2}\left(|\bar{a}^{12}_e| - \bar{a}^{11}_e  \right),  \frac{1}{4}(\bar{a}^{22}_e - \bar{a}^{11}_e-|\bar{a}^{11}_e -\bar{a}^{22}_e| \right),\\
\langle \mathbf{\bar{a}}\nabla \phi_{0,0},\nabla \phi_{1,1} \rangle_{h} \in \left( -\frac{1}{2}(\min\{\bar{a}^{11}_e,\bar{a}^{22}_e\}-\bar{a}^{12}_e), -\frac{1}{2}(|\bar{a}^{12}_e|+\bar{a}^{12}_e )\right],\\
\langle \mathbf{\bar{a}}\nabla \phi_{0,1},\nabla \phi_{1,0} \rangle_{h} \in \left( -\frac{1}{2}(\min\{\bar{a}^{11}_e,\bar{a}^{22}_e\}+\bar{a}^{12}_e), -\frac{1}{2}(|\bar{a}^{12}_e|-\bar{a}^{12}_e )\right],
\end{aligned}
\end{equation}
which are all non-positive. Again, when $|\bar{a}^{12}_e| = \min\{\bar{a}^{11}_e,\bar{a}^{22}_e\}$, we will take the above values as the bound of the closed side of the interval.

Given $i, j \in\left\{1, \ldots, N_h + N_h^{\partial}\right\}$, obviously, if both $\mathbf{x}_i$ and $\mathbf{x}_j$ are vertices of the same elements $e$, then we have
\begin{equation}
\begin{aligned}
& a_{ij} = \mathcal A_h( \varphi_j, \varphi_i) \\
= & \sum_{e \in \Omega_h} \int_{e} \mathbf{\bar{a}} \nabla \varphi_j \cdot \nabla \varphi_i d^h_{\boldsymbol{\lambda}_e}\mathbf{x} + \int_{e} c \varphi_j  \varphi_i  d^h_{1}\mathbf{x} \\
= & \sum_{e \in \Omega_h} \int_{\hat K} \mathbf{\bar{a}} \hat{\nabla} \hat{\varphi}_j \cdot \hat{\nabla} \hat{\varphi}_i d^h_{\boldsymbol{\lambda}_e}\hat{\mathbf{x}} + \int_{\hat K} \hat{c} \hat{\varphi}_j  \hat{\varphi}_i   d^h_{1}\hat{\mathbf{x}} \\
= & \sum_{i,j \in e} \int_{\hat K} \mathbf{\bar{a}} \hat{\nabla} \hat{\varphi}_j \cdot \hat{\nabla} \hat{\varphi}_i d^h_{\boldsymbol{\lambda}_e}\hat{\mathbf{x}} + \int_{\hat K} \hat{c} \hat{\varphi}_j  \hat{\varphi}_i d^h_{1}\hat{\mathbf{x}}
\end{aligned}
\end{equation}
where $\sum_{i,j \in e}$ means summation over all elements $e$ containing both vertices $i$ and $j$.

Notice that $\int_{\hat K} \hat{c} \hat{\varphi}_j  \hat{\varphi}_i   d^h_{1}\hat{\mathbf{x}}$ vanish if $i \neq j$ and $\int_{\hat K} \mathbf{\bar{a}} \hat{\nabla} \hat{\varphi}_j \cdot \hat{\nabla} \hat{\varphi}_i d^h_{\boldsymbol{\lambda}_e}\hat{\mathbf{x}}$ aligns with one of the values in \eqref{local-stif-matr-val} depending on their relative positions. Therefore, for $i \neq j$, with \eqref{param-setting} and the assumption $|\bar{a}^{12}_e| \leq \min\{\bar{a}^{11}_e,\bar{a}^{22}_e\}$ we have 
\begin{equation}
\begin{aligned}
A_{i j}=\sum_{i,j \in e} \int_{\hat K} \mathbf{\bar{a}} \hat{\nabla} \hat{\varphi}_j \cdot \hat{\nabla} \hat{\varphi}_i d^h_{\boldsymbol{\lambda}_e}\hat{\mathbf{x}} \leq 0.
\end{aligned}
\end{equation}

% Note that
% \begin{align*}
% \langle \mathbf{\bar{a}}\nabla \phi_{0,0},\nabla \phi_{0,1} \rangle_{e,h_1}=\langle \mathbf{\bar{a}}\nabla \phi_{1,1},\nabla \phi_{1,0}\rangle_{e,h_1} = -\frac{1}{2}\bar{a}^{22}_e < 0,\\
% \langle \mathbf{\bar{a}}\nabla \phi_{0,0},\nabla \phi_{1,0} \rangle_{e,h_1}=\langle \mathbf{\bar{a}}\nabla \phi_{0,1},\nabla \phi_{1,1}\rangle_{e,h_1} = -\frac{1}{2}\bar{a}^{11}_e < 0,\\
% \langle \mathbf{\bar{a}}\nabla \phi_{0,0},\nabla \phi_{1,1} \rangle_{e,h_1} =  -\frac{1}{2}\bar{a}^{12}_e,\quad
% \langle \mathbf{\bar{a}}\nabla \phi_{0,1},\nabla \phi_{1,0} \rangle_{e,h_1} =  \frac{1}{2}\bar{a}^{12}_e, 
% \end{align*}
% and
% \begin{align*}
% \langle \mathbf{\bar{a}}\nabla \phi_{0,0},\nabla \phi_{0,1} \rangle_{e,h_2}=\langle \mathbf{\bar{a}}\nabla \phi_{1,1},\nabla \phi_{1,0}\rangle_{e,h_2} = \frac{1}{4}(\bar{a}^{11}_e-\bar{a}^{22}_e),\\
% \langle \mathbf{\bar{a}}\nabla \phi_{0,0},\nabla \phi_{1,0} \rangle_{e,h_2}=\langle \mathbf{\bar{a}}\nabla \phi_{0,1},\nabla \phi_{1,1}\rangle_{e,h_2} = \frac{1}{4}(\bar{a}^{22}_e-\bar{a}^{11}_e),\\
% \langle \mathbf{\bar{a}}\nabla \phi_{0,0},\nabla \phi_{1,1} \rangle_{e,h_2} = -\frac{1}{4}(\bar{a}^{11}_e+\bar{a}^{22}_e) -\frac{1}{2}\bar{a}^{12}_e\le 0,\\
% \langle \mathbf{\bar{a}}\nabla \phi_{0,1},\nabla \phi_{1,0} \rangle_{e,h_2} = -\frac{1}{4}(\bar{a}^{11}_e+\bar{a}^{22}_e) + \frac{1}{2}\bar{a}^{12}_e\le 0.
% \end{align*} 

For $i=1,\dots, N_h$, we note that
\begin{equation}\label{stiff-row-sum}
\sum_{j=1}^{N_h+N_h^{\partial}} A_{i j}=\sum_{j=1}^{N_h+N_h^{\partial}}\mathcal A_h( \varphi_j, \varphi_i)= \mathcal A_h( 1,\varphi_i) = Cc_{i} \geq 0,
\end{equation}
where $C$ is a certain positive number and $c_{i} = c(\mathbf{x}_i) \geq 0$.
If $\mathbf{x}_i$ has no neighboring node on the boundary, by $\sum_{j=1}^{N_h} A_{i j}=\sum_{j=1}^{N_h+N_h^{\partial}} A_{i j}$ and \eqref{stiff-row-sum}, the $i$-th row sum of $A$ is non-negative. Therefore, we have
\begin{equation}\label{weak-diag-dominance}
A_{ii}\geq\sum_{ j=1, j \neq i}^{N_h} \left|A_{i j}\right|.
\end{equation}
If $\mathbf{x}_i$ has a neighboring node on the boundary, with \eqref{stiff-row-sum} and $\sum_{j=1}^{N_h} A_{i j}\geq \sum_{j=1}^{N_h+N_h^{\partial}} A_{i j}$ due to $A_{i j} \leq 0$ for $i\neq j$, 
we do have \eqref{weak-diag-dominance} holds. When $\mathbf{x}_i$ has two neighboring node on the boundary, based on \eqref{local-stif-matr-val}, among the two neighboring nodes on the boundary of $\mathbf{x}_i$, there exists nodes $\mathbf{x}_l$ with $l\in \{N_h+1, \dots, N_h+N_h^{\partial}\}$ such that $A_{i l} < 0$. Then we have
\begin{equation*}
\sum_{j=1}^{N_h} A_{i j} \geq \sum_{j=1}^{N_h+N_h^{\partial}} A_{i j} - A_{i l} > 0.
\end{equation*}
Therefore, the stiffness matrix $A$ is an $M$-matrix. Since the lumped mass matrix is diagonal and entry-wise positive, with $A_{i j} \leq 0$ and noticing that $\bar{L}_h^{-1}=\left(\begin{array}{cc}L_h^{-1} & -L_h^{-1} B^{\partial} \\ 0 & I\end{array}\right)$, we conclude  $\bar{L}_h$ is also an $M$-matrix. Then with \eqref{stiff-row-sum} and Theorem \ref{them-dmp} we obtain the finite difference operator defined by $\bar{L}_h$ satisfies discrete maximum principle.
\end{proof}

\begin{rmk}
For each element $e$, the choice in \eqref{param-setting} make $\lambda_e^1, \lambda_e^2 > 0$, which implies the $V^h$-ellipticity of the bilinear form \eqref{bilinear-form-appr} discussed in Section \ref{sec-vh-ellipticity}. Therefore, we can assure of $V^h$-ellipticity and the stiffness matrix being an M-matrix simultaneously.
\end{rmk}
\begin{rmk}
The constraint on the coefficient, $|\bar{a}^{12}_e| \leq \min\{\bar{a}^{11}_e,\bar{a}^{22}_e\}$, aligns with the condition for rendering the stiffness matrix as an $M$-matrix in the seven-point stencil control volume method with optimal optimal monotonicity region in the case of homogeneous medium and uniform mesh in \cite{nordbotten2007monotonicity}. In \cite{ngo2016monotone}, the authors show that a three-by-three stencil can be used to construct monotone finite difference schemes under the assumption $|a^{12}| < \min\{a^{11},a^{22}\}$.
\end{rmk}
\begin{rmk}
If the domain is not convex, e.g., an $L$-shaped domain, as long as it can be partitioned by uniform square meshes satisfying the coefficients constraints or quadrilateral meshes satisfying the mesh constraints derived in Section \ref{sec-gen-quad-mesh}, the stiffness matrix is still an $M$-matrix and the monotonicity holds. But the a priori error estimates in Section \ref{sec-convergence} might no longer hold due to possible loss of the elliptic regularity on a nonconvex domain.
\end{rmk}

\begin{rmk}
The choice of the quadrature parameters in \eqref{param-setting} is sharp to enforce the $\bar{L}_h$ being an $M$-matrix but not for monotonicity since $M$-matrix property is just a sufficient but not necessary condition for monotonicity.
\end{rmk}

\section{Convergence of the $Q^1$ finite element method with mixed quadrature}\label{sec-convergence}

In this section, we prove the second-order accuracy of the scheme \eqref{hom-var-num-quad} on uniform rectangular mesh. For simplicity we only prove result for the problem with homogeneous Dirichlet boundary condition, i.e. $g=0$. For convenience, in this section, we may drop the subscript $h$ in a test function $v_h \in V^h$.  When there is no confusion, we may also drop $d\mathbf{x}$ or $d\hat{\mathbf{x}}$ in a integral.  

\subsection{Approximation error estimate of bilinear forms}
In this subsection, we estimate the approximation error of $\mathcal A_h(u,v)$ to $\mathcal A(u,v)$.

\begin{theorem}\label{bilinear-form-appr-error}
Assume $a^{ij}, c \in W^{2, \infty}(\Omega)$ for $i,j = 1,2$ and $u \in H^3(\Omega)$, then $\forall v \in V^h$, on element $e$, we have
\begin{align}
\int_e\left(\mathbf{a}\nabla u\right)\cdot \nabla v d\mathbf{x} - \int_e\left(\bar{\mathbf{a}}_e\nabla u\right) \cdot \nabla v d^h_{\boldsymbol{\lambda}_e}\mathbf{x}=& \mathcal{O}(h^2)\|u\|_{3,e}\|v\|_{2,e},\\
\int_e c uv d\mathbf{x}  - \int_e cuv d^h_{1}\mathbf{x}  = &\mathcal{O}\left(h^{2}\right) \|u\|_{2, e}\|v\|_{2, e}.
\end{align}
\end{theorem}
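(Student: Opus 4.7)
The plan is to split the first error into a coefficient-approximation piece and a pure-quadrature piece,
\begin{equation*}
\int_e(\mathbf{a}\nabla u)\cdot\nabla v\, d\mathbf{x} - \int_e(\bar{\mathbf{a}}_e\nabla u)\cdot\nabla v\, d^h_{\boldsymbol{\lambda}_e}\mathbf{x} = E_1 + E_2,
\end{equation*}
where $E_1 = \int_e(\mathbf{a}-\bar{\mathbf{a}}_e)\nabla u\cdot\nabla v\, d\mathbf{x}$ captures the effect of freezing the coefficient, and $E_2 = \int_e(\bar{\mathbf{a}}_e\nabla u)\cdot\nabla v\, d\mathbf{x} - \int_e(\bar{\mathbf{a}}_e\nabla u)\cdot\nabla v\, d^h_{\boldsymbol{\lambda}_e}\mathbf{x}$ captures the quadrature error. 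Because each $\bar a^{ij}_e$ is constant on $e$, $E_2$ reduces componentwise to the pure quadrature errors already treated in Lemma \ref{2nd-order-bh-cor}, yielding $|E_2|\leq C\|\mathbf{a}\|_{L^\infty(e)}\,h^2\|u\|_{3,e}\|v\|_{2,e}$ immediately.

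For $E_1$ the naive bound $\|\mathbf{a}-\bar{\mathbf{a}}_e\|_{L^\infty(e)}\leq Ch$ only produces $\mathcal{O}(h)$, so I would Taylor expand $a^{ij}$ around the midpoint $\mathbf{x}_e$,
\begin{equation*}
a^{ij}(\mathbf{x})-\bar a^{ij}_e = \sum_k \partial_k a^{ij}(\mathbf{x}_e)(x_k-x_{e,k}) + R^{ij}(\mathbf{x}),\qquad \|R^{ij}\|_{L^\infty(e)}\leq Ch^2\|\mathbf{a}\|_{W^{2,\infty}(e)}.
\end{equation*}
The Peano remainder contributes $Ch^2\|\mathbf{a}\|_{W^{2,\infty}}|u|_{1,e}|v|_{1,e}$ directly. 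The first-order Taylor term is the crux: the symmetry of the rectangular cell gives $\int_e(x_k-x_{e,k})\,d\mathbf{x}=0$, so I would subtract the cellwise mean of $\partial_i u\,\partial_j v$ from the integrand, then apply Cauchy--Schwarz together with the Poincar\'e--Wirtinger inequality to obtain
\begin{equation*}
\Bigl|\int_e(x_k-x_{e,k})\,\partial_i u\,\partial_j v\,d\mathbf{x}\Bigr|\leq Ch^{2+d/2}|\partial_i u\,\partial_j v|_{1,e}.
\end{equation*}
The $|{\cdot}|_{1,e}$ factor is opened via the Leibniz rule; the $L^\infty$ norm of $\partial_j v$ is handled by polynomial norm equivalence on $Q^1(e)$ (exploiting that $\partial_1\partial_2 v$ is constant on each cell, so $\|\partial_j v\|_{L^\infty(e)}\leq Ch^{-d/2}|v|_{1,e}$), while the $L^\infty$ norm of $\partial_i u$ is controlled via Sobolev embedding $H^2(\hat K)\hookrightarrow L^\infty(\hat K)$ on the reference element, where the scalings $|\hat u|_{k,\hat K}=h^{k-d/2}|u|_{k,e}$ ensure only semi-norms with $k\geq 1$ contribute and produce $\|\partial_i u\|_{L^\infty(e)}\leq Ch^{-1}\|u\|_{3,e}$. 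Combined with $|\partial_k a^{ij}(\mathbf{x}_e)|\leq\|\mathbf{a}\|_{W^{1,\infty}}$, the linear term is bounded by $Ch^2\|\mathbf{a}\|_{W^{2,\infty}}\|u\|_{3,e}\|v\|_{2,e}$.

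The second estimate is a pure quadrature error. The quadrature $d^h_1$ is the tensor product of the trapezoid rule, which is exact on $Q^1(\hat K)$, so mapping to $\hat K$ and invoking Bramble--Hilbert (Theorem \ref{bh-lemma}) bounds the error by $Ch^d[\hat c\hat u\hat v]_{2,\hat K}$. I would expand $\hat\partial_i^2(\hat c\hat u\hat v)$ via Leibniz, drop the terms containing $\hat\partial_i^2\hat v$ (which vanish on $Q^1(\hat K)$), and bound each surviving summand in $L^2(\hat K)$ by a H\"older inequality, placing $\hat v$ or $\hat\partial_i\hat v$ in $L^\infty$ via norm equivalence on the finite-dimensional space $Q^1(\hat K)$. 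Converting back through $|\hat f|_{k,\hat K}=h^{k-d/2}|f|_{k,e}$ and $\|\hat c\|_{W^{k,\infty}(\hat K)}\leq C\|c\|_{W^{k,\infty}(e)}$ and multiplying by $h^d$, each summand is $\leq Ch^2\|c\|_{W^{2,\infty}(e)}\|u\|_{2,e}\|v\|_{2,e}$.

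The main obstacle is the first-order Taylor term in $E_1$: recovering $\mathcal{O}(h^2)$ rather than the naive $\mathcal{O}(h)$ requires simultaneously using the first-moment cancellation on the symmetric cell, a Poincar\'e--Wirtinger step on the product $\partial_i u\,\partial_j v$, and the $Q^1$-specific polynomial inequalities for $v$, while carefully tracking $h$ factors through the reference-element transformation.
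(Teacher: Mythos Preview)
Your proposal is correct, and the top-level decomposition $E_1+E_2$ as well as the treatment of $E_2$ via Lemma~\ref{2nd-order-bh-cor} and of the second estimate via Bramble--Hilbert on $\hat K$ match the paper exactly. The one genuine difference is in how $E_1=\int_e(a-\bar a_e)\,u_{x_k}v_{x_l}\,d\mathbf{x}$ is handled. You Taylor-expand $a-\bar a_e$ around the cell center, bound the quadratic remainder directly, and for the linear term use the first-moment vanishing together with an $L^2$--$L^2$ Cauchy--Schwarz plus Poincar\'e--Wirtinger, which then forces you to control $|\partial_i u\,\partial_j v|_{1,2,e}$ via an inverse inequality on $v$ and the $H^2(\hat K)\hookrightarrow L^\infty(\hat K)$ embedding on $\partial_i u$. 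The paper instead keeps $a-\bar a_e$ intact and subtracts the cell mean of $u_{x_k}v_{x_l}$: pairing $L^\infty$--$L^1$ and applying Poincar\'e to each factor gives $\mathcal{O}(h)\cdot\mathcal{O}(h)\,\|\nabla(u_{x_k}v_{x_l})\|_{0,1,e}$, after which a single Cauchy--Schwarz yields $\mathcal{O}(h^2)\|u\|_{2,e}\|v\|_{2,e}$ directly; the residual mean piece is handled by Lemma~\ref{quaderror-theorem}. The paper's route is shorter, needs only $\|u\|_{2,e}$ (not $\|u\|_{3,e}$) for $E_1$, and avoids any Sobolev embedding or dimension-specific step; your route reaches the same conclusion but with more machinery and a reliance on $d=2$ for the embedding.
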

\begin{proof}
For $k, l = 1,2$ and function $a\in W^{2, \infty}(e)$, we have
\begin{equation}\label{bilinear-quad-err}
\begin{aligned}
& \int_e a u_{x_k}v_{x_l}d\mathbf{x} -\int_e \bar a_e u_{x_k}v_{x_l}d^h_{\boldsymbol{\lambda}_e}\mathbf{x}\\ 
=& \int_e (a-\bar{a}_e) u_{x_k} v_{x_l}d\mathbf{x} 
+\bar a_e \left(\int_e u_{x_k}v_{x_l}d\mathbf{x} - \int_e u_{x_k}v_{x_l}d^h_{\boldsymbol{\lambda}_e}\mathbf{x} \right) \\ 
=& \int_e (a-\bar{a}_e) u_{x_k} v_{x_l}d\mathbf{x} + \bar a_eE( u_{x_k}v_{x_l}).
\end{aligned}
\end{equation}

For the first term,
\begin{equation}
\begin{aligned}
&\int_e (a-\bar{a}_e) u_{x_k} v_{x_l}d\mathbf{x}\\
= & \int_e (a-\bar a_e) (u_{x_k} v_{x_l}-\overline{u_{x_k} v_{x_l}}) d \mathbf{x}  + \int_e (a-\bar a_e) \overline{u_{x_k} v_{x_l}} d \mathbf{x}\\
\leq & \|a-\bar a_e\|_{0, \infty, e} \|u_{x_k} v_{x_l}-\overline{u_{x_k} v_{x_l}}\|_{0, 1, e} + \frac{1}{meas(e)}\int_e (a-\bar a_e)d \mathbf{x} \int_e u_{x_k} v_{x_l}d \mathbf{x}.
\end{aligned}
\end{equation}
By Poincare inequality and Cauchy-Schwartz inequality, we have
\begin{equation}
\begin{aligned}
& \|a-\bar a_e\|_{0, \infty, e} \|u_{x_k} v_{x_l}-\overline{u_{x_k} v_{x_l}}\|_{0, 1, e} \\
=&\mathcal{O}(h^2)\|a\|_{1, \infty, e}\left\|\nabla\left(u_{x_k} v_{x_l}\right)\right\|_{0,1, e} =\mathcal{O}(h^2)\|u\|_{2, e}\|v\|_{2, e}.
\end{aligned}
\end{equation}
By Lemma \ref{quaderror-theorem} and Cauchy-Schwartz inequality 
\begin{equation}
\begin{aligned}
& \frac{1}{meas(e)}\int_e (a-\bar a_e)d \mathbf{x} \int_e u_{x_k} v_{x_l}d \mathbf{x} \\
= & \frac{h^{2+d}}{meas(e)}[a]_{2, \infty, e}\|u_{x_k}\|_{0, e}\|v_{x_l}\|_{0, e} = \mathcal{O}\left(h^{2}\right)\|u\|_{1, e}\|v\|_{1, e}
\end{aligned}
\end{equation}
where in the last equation $meas(e)=\mathcal{O}(h^d)$ is also used.
Therefore, we have the estimate of the first term of \eqref{bilinear-quad-err}:
\begin{equation}\label{1st-term-estimate}
\int_e (a-\bar a_e) u_{x_k} v_{x_l} d \mathbf{x} = \mathcal{O}(h^2)\|a\|_{2, \infty, e}\|u\|_{2, e}\|v\|_{2, e}.
\end{equation}

For the second term of \eqref{bilinear-quad-err}, by Lemma \ref{2nd-order-bh-cor}, we obtain
\begin{equation}
\int_e \bar a_e u_{x_k} v_{x_l} d\mathbf{x}  -\int_e \bar a_e u_{x_l} v_{x_l} d^h_{\boldsymbol{\lambda}_e}\mathbf{x} = \mathcal{O}(h^2)\|a\|_{0, \infty, e}\|u\|_{3, e}\|v\|_{2, e},
\end{equation}
which together with \eqref{1st-term-estimate} imply the estimate of \eqref{bilinear-quad-err}:
\begin{equation}
\int_e a u_{x_k}v_{x_l}d\mathbf{x} -\int_e \bar a_e u_{x_k}v_{x_l}d^h_{\boldsymbol{\lambda}_e}\mathbf{x} = \mathcal{O}(h^2)\|a\|_{2, \infty, e}\|u\|_{3, e}\|v\|_{2, e}.
\end{equation}
Therefore, we have
\begin{equation}
\int_e \left(\mathbf{a}(\mathbf{x}) \nabla u\right) \cdot\nabla v d\mathbf{x} - \int_e \left(\bar{\mathbf{a}}(\mathbf{x}) \cdot \nabla u\right) \nabla v d^h_{\boldsymbol{\lambda}_e}\mathbf{x} = \mathcal{O}\left(h^{2}\right)\|\mathbf{a}\|_{2, \infty, e} \|u\|_{3, e}\|v\|_{2, e}.
\end{equation}
Similarly we have
\begin{equation}
\int_e c u v d\mathbf{x} - \int_e cuv d^h_{1}\mathbf{x} = \mathcal{O}\left(h^{2}\right) \|c\|_{2, \infty, e}\|u\|_{2, e}\|v\|_{2, e}.
\end{equation}

%Note that 
%\begin{equation}
%\begin{aligned}
%& \int_{\hat{K}} (\hat a-\bar a_e) \hat u_{\hat x_1} \hat v_{\hat x_2} d \hat{\mathbf{x}} = \int_{\hat{K}} (\hat a-\bar a_e) (\hat u_{\hat x_1} \hat v_{\hat x_2} -\overline{\hat u_{\hat x_1} \hat v_{\hat x_2} }) d \hat{\mathbf{x}}+ \overline{\hat u_{\hat x_1} \hat v_{\hat x_2} }\int_{\hat{K}}  (\hat a-\bar a_e)  d\hat{\mathbf{x}}\\
%\leq &  \|\hat a-\bar a_e\|_{0,\infty, \hat{K}} \|\hat u_{\hat x_1} \hat v_{\hat x_2} -\overline{\hat u_{\hat x_1} \hat v_{\hat x_2} }\|_{0,1,{\hat{K}}} + [\hat a]_{2, \infty {\hat{K}}}|\hat u|_{1,{\hat{K}}} | \hat v |_{1,{\hat{K}}} \\
%\leq &  |\hat{\nabla}\hat a|_{\infty, \hat{K}}  |\hat{\nabla}(\hat u_{\hat x_1} \hat v_{\hat x_2})|_{0, 1,\hat{K}} + [\hat a]_{2, \infty, {\hat{K}}}|\hat u|_{1,{\hat{K}}} | \hat v |_{1,{\hat{K}}} \\
%\leq &  |\hat a|_{1,\infty, \hat{K}}  |\hat u|_{2, \hat{K}}  |\hat v|_{2, \hat{K}}+ |\hat a|_{2, \infty, {\hat{K}}}|\hat u|_{1,{\hat{K}}} | \hat v |_{1,{\hat{K}}}
%\end{aligned}
%\end{equation}
\end{proof}

We also have
\begin{lemma}\label{1st-order-bh-cor-bilinear}
Assume $a^{ij}, c \in W^{2, \infty}(\Omega)$ for $i,j = 1,2$. We have
$$
A\left(v_h, w_h\right)-A_h\left(v_h, w_h\right)=\mathcal{O}(h)\left\|v_h\right\|_2\left\|w_h\right\|_1, \quad \forall v_h, w_h \in V^h
$$
\end{lemma}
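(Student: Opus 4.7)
The plan is to apply Theorem \ref{bilinear-form-appr-error} element by element with $u=v_h$ and $v=w_h$, exploit the fact that $v_h$ is piecewise $Q^1$ to reduce the $H^3$-seminorm to an $H^2$-seminorm, and then trade one order of smoothness on $w_h$ for a factor of $h$ via the inverse estimate. This converts the natural $\mathcal{O}(h^2)\|v_h\|_2\|w_h\|_2$ quadrature-error bound into the target $\mathcal{O}(h)\|v_h\|_2\|w_h\|_1$.

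Concretely, first split the global quadrature error into elementwise contributions,
$$
\mathcal{A}(v_h,w_h)-\mathcal{A}_h(v_h,w_h) = \sum_e \Bigl(\int_e \mathbf{a}\nabla v_h\cdot\nabla w_h\,d\mathbf{x}-\int_e \bar{\mathbf{a}}_e\nabla v_h\cdot\nabla w_h\,d^h_{\boldsymbol{\lambda}_e}\mathbf{x}\Bigr)+\sum_e\Bigl(\int_e c\,v_h w_h\,d\mathbf{x}-\int_e c\,v_h w_h\,d^h_1\mathbf{x}\Bigr).
$$
Since $v_h|_e\in Q^1(e)$, every partial derivative of $v_h$ of total order three vanishes on $e$ (each variable appears at most linearly), so $\|v_h\|_{3,e}=\|v_h\|_{2,e}$; in particular the elementwise $H^3$-regularity required by Theorem \ref{bilinear-form-appr-error} is automatic. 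Applying that theorem on each $e$ gives
$$
\Bigl|\mathcal{A}(v_h,w_h)-\mathcal{A}_h(v_h,w_h)\Bigr| \leq C h^2\sum_e \bigl(\|v_h\|_{3,e}\|w_h\|_{2,e}+\|v_h\|_{2,e}\|w_h\|_{2,e}\bigr)\leq C h^2\sum_e \|v_h\|_{2,e}\|w_h\|_{2,e}.
$$

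Applying the discrete Cauchy--Schwarz inequality across elements yields $\mathcal{A}(v_h,w_h)-\mathcal{A}_h(v_h,w_h)=\mathcal{O}(h^2)\,\|v_h\|_2\,\|w_h\|_2$, where the norms are the broken Sobolev norms introduced in Section~\ref{sec-preliminaries}. Finally, invoking the inverse estimate $\|w_h\|_{2,e}\leq C h^{-1}\|w_h\|_{1,e}$ summed over elements gives $\|w_h\|_2\leq C h^{-1}\|w_h\|_1$, which upgrades the previous bound to $\mathcal{O}(h)\,\|v_h\|_2\,\|w_h\|_1$, as required. The only subtlety is to remember that $v_h$ is not globally $H^3$, so Theorem~\ref{bilinear-form-appr-error} is used only in its elementwise form; this causes no difficulty because $v_h|_e$ is polynomial and $|v_h|_{3,e}=0$.
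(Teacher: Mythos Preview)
Your proof is correct and follows essentially the same approach as the paper: apply Theorem~\ref{bilinear-form-appr-error} elementwise, use that third-order derivatives of $Q^1$ polynomials vanish so that $\|v_h\|_{3,e}=\|v_h\|_{2,e}$, and then recover a factor of $h$ by the inverse estimate on $w_h$. The only cosmetic difference is that the paper applies the inverse estimate on each element before summing, whereas you sum first via Cauchy--Schwarz and then invoke the global inverse estimate; the two orderings are equivalent.
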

\begin{proof}
By Theorem \ref{bilinear-form-appr-error} and noticing that the third derivative of $Q^1$ polynomial vanish, we have
\begin{align}
\int_e\left(\mathbf{a}\nabla v_h\right)\cdot \nabla w_h d\mathbf{x} - \int_e\left(\bar{\mathbf{a}}_e\nabla v_h\right) \cdot \nabla w_h d^h_{\boldsymbol{\lambda}_e}\mathbf{x}=& \mathcal{O}(h^2)\|v_h\|_{2,e}\|w_h\|_{2,e},\\
\int_e c v_h w_h d\mathbf{x}  - \int_e c v_h w_h d^h_{1}\mathbf{x}  = &\mathcal{O}\left(h^{2}\right) \|v_h\|_{2, e}\|w_h\|_{2, e}.
\end{align}
By applying the inverse estimate to polynomial $z_h$, we obtain
\begin{align}
\int_e\left(\mathbf{a}\nabla v_h\right)\cdot \nabla w_h d\mathbf{x} - \int_e\left(\bar{\mathbf{a}}_e\nabla v_h\right) \cdot \nabla w_h d^h_{\boldsymbol{\lambda}_e}\mathbf{x}=& \mathcal{O}(h)\|v_h\|_{2,e}\|w_h\|_{1,e},\\
\int_e c v_h w_h d\mathbf{x}  - \int_e c v_h w_h d^h_{1}\mathbf{x}  = &\mathcal{O}\left(h\right) \|v_h\|_{2, e}\|w_h\|_{1, e}.
\end{align}
Then by summing over all the elements we obtain prove the Lemma.
\end{proof}

\subsection{$V^h$-ellipticity and the dual problem}\label{sec-vh-ellipticity}
In order to prove the convergence results of the scheme \eqref{hom-var-num-quad}, we need $A_h$ satisfies $V^h$-ellipticity:
\begin{equation}\label{vh-ellipticity}
\forall v_h \in V_0^h, \quad C\left\|v_h\right\|_1^2 \leq A_h\left(v_h, v_h\right).  
\end{equation}

By following the proof of Lemma 5.1 in \cite{li2020superconvergence}, we have
\begin{lemma}\label{vh-ellipticity-lemma}
Assume the eigenvalues of $\mathbf{a}$ have a uniform positive lower bound and a uniform upper bound and $c$ have a upper bound. If there exists lower bound $\lambda_0 >0$ such that $\forall e \in \Omega_h$, the quadrature parameter $\lambda_e^1, \lambda_e^2 > \lambda_0$, then there are two constants $C_1, C_2>0$ independent of mesh size $h$ such that
$$
\forall v_h \in V_0^h, \quad C_1\left\|v_h\right\|_1^2 \leq A_h\left(v_h, v_h\right) \leq C_2\left\|v_h\right\|_1^2 .
$$
\end{lemma}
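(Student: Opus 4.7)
The plan is to follow a standard reference-element scaling approach analogous to Lemma 5.1 of \cite{li2020superconvergence}. The argument reduces to a quadratic-form equivalence on the four-dimensional space $Q^1(\hat K)$ whose constants depend quantitatively on $\lambda_0$.

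First, I would pull back each elemental integral to the reference cell $\hat K$ via the affine bilinear map $\mathbf{F}_e$. Since the mesh is uniform rectangular with spacing $h$, the Jacobian is constant, and both $|v_h|_{1,e}^2$ and its quadrature analogue $\int_e |\nabla v_h|^2 d^h_{\boldsymbol{\lambda}_e}\mathbf{x}$ carry the same scaling factor $h^{d-2}$. It therefore suffices to establish, for every $\hat v \in Q^1(\hat K)$ and every $\boldsymbol{\lambda}=(\lambda^1,\lambda^2)\in(\lambda_0,1]^2$,
$$ C_1(\lambda_0)\,|\hat v|_{1,\hat K}^2 \;\leq\; \int_{\hat K}|\hat\nabla \hat v|^2\,d^h_{\boldsymbol{\lambda}}\hat{\mathbf{x}} \;\leq\; C_2(\lambda_0)\,|\hat v|_{1,\hat K}^2. $$
Because $\partial_{\hat x_1}\hat v$ is a linear function of $\hat x_2$ alone (with the symmetric statement for $\partial_{\hat x_2}\hat v$), both the quadrature and the exact semi-norm become explicit quadratic forms in the four nodal values $\{v_{ij}\}$, each with the constants as kernel. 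Equivalence on the finite-dimensional quotient $Q^1(\hat K)/\mathbb{R}$ then gives the claim with a constant depending only on $\lambda_0$.

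Second, I would upgrade this scalar gradient equivalence to the matrix-weighted form. By hypothesis the eigenvalues of $\mathbf{a}$, and hence of $\bar{\mathbf{a}}_e$, are bounded between some $\alpha_0 > 0$ and $\alpha_1$, so pointwise $\alpha_0 |\xi|^2 \leq \bar{\mathbf{a}}_e \xi\cdot\xi \leq \alpha_1|\xi|^2$. Since the tensor-product weights $\hat\omega^1_p \hat\omega^2_q$ are non-negative whenever $\lambda_e^j\in(0,1]$, evaluating this inequality at each quadrature node gives
$$ \alpha_0 \int_e |\nabla v_h|^2 d^h_{\boldsymbol{\lambda}_e}\mathbf{x} \;\leq\; \int_e \bar{\mathbf{a}}_e \nabla v_h \cdot \nabla v_h\,d^h_{\boldsymbol{\lambda}_e}\mathbf{x} \;\leq\; \alpha_1 \int_e |\nabla v_h|^2 d^h_{\boldsymbol{\lambda}_e}\mathbf{x}, $$
which combined with the reference-cell equivalence yields comparability with $|v_h|_{1,e}^2$. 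Summing over $e$, bounding the $c$-term above by $\|c\|_\infty\|v_h\|_0^2$ and below by $0$ (the trapezoidal weights are non-negative and $c \geq 0$), I obtain $A_h(v_h,v_h)$ squeezed between constant multiples of $|v_h|_1^2$ and $|v_h|_1^2 + \|v_h\|_0^2$. A Poincar\'e inequality on $V_0^h$ converts the semi-norm into the full $\|\cdot\|_1$ norm in the lower bound, and the upper bound is immediate.

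The main obstacle is the reference-cell estimate: verifying that the quadrature quadratic form on $Q^1(\hat K)$ stays non-degenerate throughout $(\lambda^1,\lambda^2)\in(\lambda_0,1]^2$ and quantifying its smallest positive eigenvalue in terms of $\lambda_0$. This is precisely where the assumption $\lambda_0 > 0$ is essential: if $\lambda^j \to 0$ (a pure midpoint rule in the $\hat x_j$ direction), the quadrature becomes blind to modes whose derivative vanishes at edge midpoints, and coercivity fails. Once $\lambda_0 > 0$ is fixed, however, the explicit quadratic form has coefficients depending continuously on $(\lambda^1,\lambda^2)$ on the compact parameter set $[\lambda_0,1]^2$, yielding a uniform positive lower bound and closing the argument.
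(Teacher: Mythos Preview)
Your proposal is correct and follows essentially the same reference-element argument as the paper: reduce to $\hat K$, establish equivalence of the quadrature quadratic form with $|\cdot|_{1,\hat K}^2$ on $Q^1(\hat K)/\mathbb{R}$, insert the eigenvalue bounds on $\bar{\mathbf a}_e$ via non-negativity of the weights, sum, and apply Poincar\'e. The only technical difference is how uniformity in $\boldsymbol{\lambda}$ is obtained: the paper fixes the comparison at the pure trapezoid rule $\boldsymbol{\lambda}=(1,1)$ and then uses the elementary domination $\int_{\hat K}|\hat\nabla\hat v|^2\,d^h_{\boldsymbol{\lambda}}\hat{\mathbf x}\ge\lambda_0\int_{\hat K}|\hat\nabla\hat v|^2\,d^h_1\hat{\mathbf x}$ (each $(\hat v)_{\hat x_i}^2$ depends on only the transverse variable, so only one factor of $\lambda_0$ appears), whereas you appeal to continuity of the quadratic form over the compact parameter set $[\lambda_0,1]^2$; both routes are standard and yield the same conclusion.
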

\begin{proof}
For element $e$, at first we map all the functions to the reference element $\hat{K}$. Let $Z_{0, \hat{K}}$ denote the set of vertices on the reference element $\hat{K}$. We notice that the set $Z_{0, \hat{K}}$ is a $Q^1(\hat{K})$-unisolvent subset. Since the weights of trapezoid rule are strictly positive, we have
$$
\forall \hat{p} \in Q^1(\hat{K}), \quad \sum_{i=1}^2\int_{\hat{K}} \hat{p}_{\hat x^i}^2 d^h_{1}\hat{\mathbf{x}} =0 \Longrightarrow \hat{p}_{\hat x^i}=0 \text { at } Z_{0, \hat{K}},
$$
where $i=1,2$. As a consequence, $\sum_{i=1}^2\int_{\hat{K}} \hat{p}_{\hat x^i}^2 d^h_{1}\hat{\mathbf{x}}$ defines a norm over the quotient space $Q^1(\hat{K}) / Q^0(\hat{K})$. Since that $|\cdot|_{1, \hat{K}}$ is also a norm over the same quotient space, by the equivalence of norms over a finite dimensional space, we have
$$
\forall \hat{p} \in Q^1(\hat{K}), \quad C_1|\hat{p}|_{1, \hat{K}}^2 \leq \sum_{i=1}^2\int_{\hat{K}} \hat{p}_{\hat x^i}^2 d^h_{1}\hat{\mathbf{x}} \leq C_2|\hat{p}|_{1, \hat{K}}^2
$$
As the quadrature parameter $\lambda_e^1, \lambda_e^2 \geq \lambda_0 \geq 0$, we have
$$
C_1\left|\hat{v}_h\right|_{1, \hat{K}}^2 \leq C_1 \sum_{i=1}^2\int_{\hat{K}} (\hat{v}_h)_{\hat x_i}^2 d^h_{1}\hat{\mathbf{x}}  \leq 
\int_{\hat{K}} (\bar{\mathbf{a}}^{i j}_e \nabla \hat{v}_h)\cdot \nabla \hat{v}_h d^h_{\boldsymbol{\lambda_e}}\hat{\mathbf{x}} + \int_{\hat{K}}  \hat{c} \hat{v}_h^2 d^h_{1}\hat{\mathbf{x}} \leq C_2\left\|\hat{v}_h\right\|_{1, \hat{K}}^2.
$$
Mapping these back to the original element $e$ and summing over all elements, by the equivalence of two norms $|\cdot|_1$ and $\|\cdot\|_1$ for the space $H_0^1(\Omega) \supset V_0^h$, we obtain the conclusion.
\end{proof}

In the following part, we assume the assumption of Lemma \ref{vh-ellipticity-lemma} is fulfilled, i.e. the $V^h$-ellipticity holds.

In order to apply the Aubin-Nitsche duality argument for establishing convergence of function values, we need certain estimates on a proper dual problem. 

Define $\theta:=u-u_h$ and consider the dual problem: find $w \in H_0^1(\Omega)$ satisfying
\begin{equation}\label{dual-problem}
A^*(w, v)=\left(\theta, v\right), \quad \forall v \in H_0^1(\Omega),
\end{equation}
where $A^*(\cdot, \cdot)$ is the adjoint bilinear form of $A(\cdot, \cdot)$ such that
$$
A^*(u, v)=A(v, u)=(\mathbf{a} \nabla v, \nabla u)+(c v, u) .
$$
Although here the bilinear form we considered is symmetric i.e. $A(\cdot, \cdot) = A^*(\cdot, \cdot)$, we still use $A^*(\cdot, \cdot)$ for abstractness.

Let $w_h \in V_0^h$ be the solution to
\begin{equation}\label{dual-problem-num}
A_h^*\left(w_h, v_h\right)=\left(\theta, v_h\right), \quad \forall v_h \in V_0^h.
\end{equation}

Notice that the right hand side of \eqref{dual-problem-num} is different from the right hand side of the scheme \eqref{hom-var-num-quad}.

We have the following standard estimates on $w_h$ for the dual problem.
\begin{lemma}\label{lem-w-wh-1-norm}
 Assume $a^{i j}, c \in W^{2, \infty}(\Omega)$ and $u \in H^3(\Omega), f\in$ $H^{2}(\Omega)$. Let $w$ be defined in \eqref{dual-problem}, $w_h$ be defined in \eqref{dual-problem-num}. With elliptic regularity and $V^h$-ellipticity hold, we have
 \begin{equation}
\begin{aligned}
\left\|w-w_h\right\|_1 \leq & C h\|w\|_2 \\
\left\|w_h\right\|_2 \leq & C\left\|\theta\right\|_0 .
\end{aligned}
\end{equation}
\end{lemma}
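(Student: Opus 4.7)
The plan is to prove both bounds by decomposing $w_h$ (respectively $w-w_h$) using a Lagrange interpolant $\Pi w \in V_0^h$ and combining the $V^h$-ellipticity from Lemma~\ref{vh-ellipticity-lemma}, the quadrature-error estimate from Lemma~\ref{1st-order-bh-cor-bilinear}, standard interpolation bounds, an inverse estimate, and elliptic regularity for the dual problem. The bilinear form is symmetric, so I can freely identify $A^*$ with $A$ and $A_h^*$ with $A_h$ when invoking the previously stated lemmas.

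For the first bound $\|w-w_h\|_1 \leq Ch\|w\|_2$, I would write $w - w_h = (w - \Pi w) + (\Pi w - w_h)$. The interpolation error piece is immediate: $\|w - \Pi w\|_1 \leq Ch\|w\|_2$. For the discrete piece $v_h := \Pi w - w_h \in V_0^h$, $V^h$-ellipticity gives $C\|v_h\|_1^2 \leq A_h^*(v_h, v_h)$. Subtracting \eqref{dual-problem-num} from the corresponding identity for $\Pi w$ and inserting $\pm A^*(\Pi w, v_h)$, together with $A^*(w, v_h) = (\theta, v_h)$, yields
\[
A_h^*(v_h, v_h) = \bigl[A_h^*(\Pi w, v_h) - A^*(\Pi w, v_h)\bigr] + A^*(\Pi w - w, v_h).
\]
Lemma~\ref{1st-order-bh-cor-bilinear} bounds the first bracket by $Ch\|\Pi w\|_2 \|v_h\|_1$, and continuity of $A^*$ together with the interpolation estimate bounds the second term by $Ch\|w\|_2\|v_h\|_1$. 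Since $\|\Pi w\|_2 \leq C\|w\|_2$ by $H^2$-stability of the bilinear Lagrange interpolant on the uniform rectangular mesh, dividing through by $\|v_h\|_1$ produces $\|v_h\|_1 \leq Ch\|w\|_2$, and the triangle inequality finishes this part.

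For the second bound $\|w_h\|_2 \leq C\|\theta\|_0$, I would again split $w_h = (w_h - \Pi w) + \Pi w$ and measure in the broken $H^2$ norm. Stability gives $\|\Pi w\|_2 \leq C\|w\|_2$. Because $w_h - \Pi w$ is a piecewise $Q^1$ polynomial, the inverse estimate applies elementwise, so that
\[
\|w_h - \Pi w\|_2 \leq Ch^{-1}\|w_h - \Pi w\|_1 \leq Ch^{-1}\bigl(\|w - w_h\|_1 + \|w - \Pi w\|_1\bigr) \leq C\|w\|_2,
\]
where the first bound of the lemma and the standard interpolation estimate were used. Hence $\|w_h\|_2 \leq C\|w\|_2$, and elliptic regularity for the dual problem \eqref{dual-problem}, $\|w\|_2 \leq C\|\theta\|_0$, concludes the proof.

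The main obstacle I anticipate is making sure the quadrature-error estimate of Lemma~\ref{1st-order-bh-cor-bilinear} is applied correctly with $v_h = \Pi w$, so that the $h^{-1}$ lost in passing to $\|\cdot\|_2$ is compensated by the $h$ gained from the quadrature lemma; once $H^2$-stability of the interpolant $\Pi$ and the symmetry of $A$ are in hand, the remaining manipulations are routine bookkeeping with constants independent of $h$.
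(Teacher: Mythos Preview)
Your proposal is correct and follows essentially the same route as the paper: the same decomposition via an intermediate discrete function, $V^h$-ellipticity, Lemma~\ref{1st-order-bh-cor-bilinear} for the quadrature consistency term, the inverse estimate, and elliptic regularity for the dual problem. The only cosmetic difference is that the paper uses the elementwise $Q^1$ projection $\Pi_1$ from \eqref{q1-proj} rather than the Lagrange interpolant, but both operators furnish the needed bounds $\|w-\Pi w\|_1\le Ch\|w\|_2$ and $\|\Pi w\|_2\le C\|w\|_2$, so the arguments are interchangeable.
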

\begin{proof}
By $V^h$-ellipticity, we have $C_1\left\|w_h-v_h\right\|_1^2 \leq A_h^*\left(w_h-v_h, w_h-v_h\right)$. By the definition of the dual problem \eqref{dual-problem}, we have
$$
A_h^*\left(w_h, w_h-v_h\right)=\left(\theta, w_h-v_h\right)=A^*\left(w, w_h-v_h\right), \quad \forall v_h \in V_0^h.
$$
Therefore  $\forall v_h \in V_0^h$, by Lemma \ref{1st-order-bh-cor-bilinear}, we have
$$
\begin{aligned}
& C_1\left\|w_h-v_h\right\|_1^2 \leq A_h^*\left(w_h-v_h, w_h-v_h\right) \\
= & A^*\left(w-v_h, w_h-v_h\right)+\left[A_h^*\left(w_h, w_h-v_h\right)-A^*\left(w, w_h-v_h\right)\right]+\left[A^*\left(v_h, w_h-v_h\right)-A_h^*\left(v_h, w_h-v_h\right)\right] \\
= & A^*\left(w-v_h, w_h-v_h\right)+\left[A\left(w_h-v_h, v_h\right)-A_h\left(w_h-v_h, v_h\right)\right] \\
\leq & C\left\|w-v_h\right\|_1\left\|w_h-v_h\right\|_1+C h\left\|v_h\right\|_2\left\|w_h-v_h\right\|_1,
\end{aligned}
$$
which implies
\begin{equation}\label{w-wh-1norm}
\quad\left\|w-w_h\right\|_1 \leq\left\|w-v_h\right\|_1+\left\|w_h-v_h\right\|_1 \leq C\left\|w-v_h\right\|_1+C h\left\|v_h\right\|_2.
\end{equation}

Now consider $\Pi_1 w \in V_0^h$ where $\Pi_1$ is the piece-wise $Q^1$ projection and its definition on each element is defined through \eqref{q1-proj} on the reference element. By Theorem \ref{bh-lemma} on the projection error, we have
\begin{equation}\label{w-proj-w}
\left\|w-\Pi_1 w\right\|_1 \leq C h\|w\|_2, \quad\left\|w-\Pi_1 w\right\|_2 \leq C\|w\|_2,
\end{equation}
which implies
\begin{equation}\label{proj-w-2-norm}
\left\|\Pi_1 w\right\|_2 \leq\|w\|_2+\left\|w-\Pi_1 w\right\|_2 \leq C\|w\|_2.
\end{equation}
By setting $v_h=\Pi_1 w$, using \eqref{w-wh-1norm}, \eqref{w-proj-w} and \eqref{proj-w-2-norm}, we have
\begin{equation}\label{w-wh-1norm-2}
\left\|w-w_h\right\|_1 \leq C\left\|w-\Pi_1 w\right\|_1+C h\left\|\Pi_1 w\right\|_2 \leq C h\|w\|_2 .
\end{equation}

By \eqref{w-proj-w} and \eqref{w-wh-1norm-2}, we also have
\begin{equation}\label{w-wh-1norm-3}
\left\|w_h-\Pi_1 w\right\|_1 \leq\left\|w-\Pi_1 w\right\|_1+\left\|w-w_h\right\|_1 \leq C h\|w\|_2 .
\end{equation}

By the inverse estimate on the piece-wise polynomial $w_h-\Pi_1 w$, we obtain
\begin{equation}\label{wh-2}
\left\|w_h\right\|_2 \leq\left\|w_h-\Pi_1 w\right\|_2+\left\|\Pi_1 w-w\right\|_2+\|w\|_2 \leq C h^{-1}\left\|w_h-\Pi_1 w\right\|_1+C\|w\|_2.
\end{equation}
With \eqref{w-wh-1norm-3}, \eqref{wh-2} and the elliptic regularity $\|w\|_2 \leq C\left\|\theta\right\|_0$, we obtain
$$
\left\|w_h\right\|_2 \leq C\|w\|_2 \leq C\left\|\theta\right\|_0 .
$$
\end{proof}

\subsection{Convergence results}

In this section, we initially establish the error estimate for $\left\|u-u_h\right\|_{1, \Omega}$. Subsequently, we demonstrate that the $Q^1$ finite element method, as given by \eqref{hom-var-num-quad}, achieves second-order accuracy for function values.

We have the estimate of the error $\left\|u-u_h\right\|_{1,\Omega}$ as follows:
\begin{theorem}\label{thm-u-uh-1-norm}
 Assume $a^{i j}, c \in W^{2, \infty}(\Omega)$ and $u\in H^{2}(\Omega), f \in$ $H^{2}(\Omega)$. With elliptic regularity and $V^h$-ellipticity hold, we have
$$
\left\|u-u_h\right\|_{1, \Omega}=\mathcal{O}\left(h\right)\left(\|u\|_{2, \Omega}+\|f\|_{2, \Omega}\right).
$$
\end{theorem}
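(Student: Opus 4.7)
The plan is to run a variant of Strang's first lemma, leveraging the already-established $V^h$-ellipticity (Lemma \ref{vh-ellipticity-lemma}) together with the two consistency estimates already available, namely Lemma \ref{rhs-quad-error} for the right-hand-side quadrature error and Lemma \ref{1st-order-bh-cor-bilinear} for the bilinear-form quadrature error. The natural intermediate is a piecewise $Q^1$ projection $\Pi_1 u$ of the exact solution, for which one has the standard interpolation-type bounds $\|u-\Pi_1 u\|_1 \le Ch\|u\|_2$ and $\|\Pi_1 u\|_2 \le C\|u\|_2$, exactly as invoked in the proof of Lemma \ref{lem-w-wh-1-norm}.

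First I would set $v_h := \Pi_1 u \in V_0^h$ and $w_h := u_h - v_h$. By $V^h$-ellipticity,
\begin{equation*}
C_1 \|w_h\|_1^2 \;\le\; A_h(w_h, w_h) \;=\; A_h(u_h, w_h) - A_h(v_h, w_h).
\end{equation*}
Using the scheme \eqref{hom-var-num-quad} to rewrite $A_h(u_h, w_h) = \langle f, w_h\rangle_h$ and the continuous weak form \eqref{hom-var} to rewrite $(f, w_h) = A(u, w_h)$, I would decompose
\begin{equation*}
A_h(w_h, w_h) \;=\; \bigl[\langle f, w_h\rangle_h - (f, w_h)\bigr] \;+\; A(u - v_h, w_h) \;+\; \bigl[A(v_h, w_h) - A_h(v_h, w_h)\bigr].
\end{equation*}

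Next I would bound each of the three bracketed terms. The first is controlled by Lemma \ref{rhs-quad-error} giving $\mathcal{O}(h^2)\|f\|_2 \|w_h\|_1$. The second is controlled by continuity of $A(\cdot,\cdot)$ (using $a^{ij}, c \in L^\infty$) and the interpolation bound, yielding $C\|u-v_h\|_1\|w_h\|_1 \le Ch\|u\|_2\|w_h\|_1$. The third is exactly what Lemma \ref{1st-order-bh-cor-bilinear} is designed for: $\mathcal{O}(h)\|v_h\|_2\|w_h\|_1$, and here the estimate $\|\Pi_1 u\|_2 \le C\|u\|_2$ plays the crucial role of converting the piecewise $H^2$ seminorm of $v_h$ into $\|u\|_2$. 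Dividing through by $\|w_h\|_1$ gives $\|w_h\|_1 \le Ch(\|u\|_2 + \|f\|_2)$, and then the triangle inequality $\|u-u_h\|_1 \le \|u-\Pi_1 u\|_1 + \|w_h\|_1$ completes the argument.

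The main obstacle, such as it is, is not a single hard calculation but rather organizing the Strang-type splitting so that no term requires more regularity on $u$ than $H^2$; in particular, one cannot use the sharper Theorem \ref{bilinear-form-appr-error} (which needs $u \in H^3$) directly on $u$, which is why routing through $v_h = \Pi_1 u$ and invoking the weaker $\mathcal{O}(h)$ polynomial-only consistency of Lemma \ref{1st-order-bh-cor-bilinear} is essential. One also has to verify that the inverse-estimate-based bound $\|\Pi_1 u\|_2 \le C\|u\|_2$ is legitimate on the uniform rectangular mesh, but this is immediate from the continuity $\hat{\Pi}_1 : L^2(\hat K) \to H^2(\hat K)$ already noted in the preliminaries combined with a standard scaling argument.
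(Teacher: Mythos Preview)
Your proposal is correct and takes essentially the same approach as the paper: both are Strang's first lemma combined with the consistency bounds of Lemma~\ref{1st-order-bh-cor-bilinear} and Lemma~\ref{rhs-quad-error}. The only cosmetic difference is that the paper invokes the First Strang Lemma abstractly and then appeals to ``the approximation property of piece-wise $Q^1$ polynomials,'' whereas you write out the Strang splitting explicitly with the concrete choice $v_h=\Pi_1 u$ and the accompanying bound $\|\Pi_1 u\|_2\le C\|u\|_2$.
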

\begin{proof}
By the First Strang Lemma,
\begin{equation}
\begin{aligned}
\left\|u-u_h\right\|_{1, \Omega} \leq & C\left(\inf _{v_h \in V^h}\left\{\left\|u-v_h\right\|_{1, \Omega}+\sup _{w_h \in V_h} \frac{\left|\mathcal{A}\left(v_h, w_h\right)-\mathcal{A}_h\left(v_h, w_h\right)\right|}{\left\|w_h\right\|_{1, \Omega}}\right\}+\right. \\
& \left.\quad+\sup _{w_h \in V^h} \frac{\left|\left\langle f, w_h\right\rangle_h-\left(f, w_h\right)\right|}{\left\|w_h\right\|_{1, \Omega}}\right).
\end{aligned}
\end{equation}

By Lemma \ref{1st-order-bh-cor-bilinear}, we have:
\begin{equation*}
\begin{aligned}
\frac{\left|\mathcal{A}\left(v_h, w_h\right)-\mathcal{A}_h\left(v_h, w_h\right)\right|}{\left\|w_h\right\|_{1, \Omega}}
= \frac{\mathcal{O}(h)\|v_h\|_{2,\Omega}\|w_h\|_{1,\Omega}}{\left\|w_h\right\|_{1, \Omega}}
= \mathcal{O}(h)\|v_h\|_{2,\Omega}.
\end{aligned}
\end{equation*}

By Lemma \ref{rhs-quad-error}, we have 
$$
\sup _{w_h \in V^h} \frac{\left|\left\langle f, w_h\right\rangle_h-\left(f, w_h\right)\right|}{\left\|w_h\right\|_{1, \Omega}} = \frac{\mathcal{O}(h^2)\|f\|_{2,\Omega}\|w_h\|_{1,\Omega}}{\|w_h\|_{1,\Omega}} = \mathcal{O}(h^2)\|f\|_{2,\Omega}.
$$

By the approximation property of piece-wise $Q^1$ polynomials,
\begin{equation*}
\begin{aligned}
\left\|u-u_h\right\|_{1, \Omega} = \mathcal{O}(h)(\|u\|_{2,\Omega}+ |f\|_{2,\Omega}).
\end{aligned}
\end{equation*}

\end{proof}

In the following part we prove the Aubin-Nitsche Lemma up to the quadrature error for establishing convergence of function values.
\begin{theorem}
 Assume $a^{i j}, c \in W^{2, \infty}(\Omega)$ and $u(\mathbf{x}) \in H^3(\Omega), f\in$ $H^{2}(\Omega)$.  Assume $V^h$ ellipticity holds. Then the numerical solution from scheme \eqref{hom-var-num-quad} $u_h$ is a $2$-th order accurate approximation to the exact solution $u$:
$$
\left\|u_h-u\right\|_{0, \Omega}=\mathcal{O}\left(h^2\right)\left(\|u\|_{2, \Omega}+\|f\|_{2, \Omega}\right).
$$
\end{theorem}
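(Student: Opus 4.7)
The plan is to adapt the classical Aubin--Nitsche duality argument so that the additional quadrature errors are absorbed at order $h^{2}$. Setting $\theta := u - u_h$ and testing the dual problem \eqref{dual-problem} against $v = \theta$ gives $\|\theta\|_0^2 = A^*(w,\theta) = A(\theta, w)$. I would split this as $A(\theta, w - w_h) + A(\theta, w_h)$ with $w_h \in V_0^h$ the discrete dual solution of \eqref{dual-problem-num}. The piece $A(\theta, w - w_h)$ is handled by continuity of $A$, the $H^1$ error estimate of Theorem \ref{thm-u-uh-1-norm}, the bound $\|w - w_h\|_1 \leq Ch\|w\|_2$ from Lemma \ref{lem-w-wh-1-norm}, and elliptic regularity $\|w\|_2 \leq C\|\theta\|_0$, which together yield $|A(\theta, w - w_h)| \leq Ch^2(\|u\|_2 + \|f\|_2)\|\theta\|_0$.

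For the second piece, I would use $A(u, w_h) = (f, w_h)$ from the exact variational formulation and $A_h(u_h, w_h) = \langle f, w_h\rangle_h$ from the scheme \eqref{hom-var-num-quad} to rewrite
\[
A(\theta, w_h) = \bigl[(f, w_h) - \langle f, w_h\rangle_h\bigr] + \bigl[A_h(u_h, w_h) - A(u_h, w_h)\bigr].
\]
Lemma \ref{rhs-quad-error} bounds the first bracket by $Ch^2\|f\|_2\|w_h\|_1$, and combining with $\|w_h\|_1 \leq \|w_h\|_2 \leq C\|\theta\|_0$ from Lemma \ref{lem-w-wh-1-norm} turns this into $O(h^2)\|f\|_2\|\theta\|_0$. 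The second bracket is the delicate term: I would invoke the sharpened bilinear quadrature error bound $|(A - A_h)(u_h, w_h)| \leq Ch^2\|u_h\|_{2,\Omega}\|w_h\|_{2,\Omega}$, which is precisely the elementwise inequality appearing inside the proof of Lemma \ref{1st-order-bh-cor-bilinear} (before the inverse estimate is invoked), summed over elements. It relies crucially on the vanishing of the third derivatives of $Q^1$ polynomials on each cell; without this refinement Lemma \ref{1st-order-bh-cor-bilinear} only delivers $O(h)$ and is insufficient to close the duality estimate.

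The main obstacle is then controlling $\|u_h\|_{2,\Omega}$, since $u_h$ carries no a priori $H^2$ bound from the hypotheses. To handle it, I would insert the piecewise $Q^1$ projection $\Pi_1 u$ used in the proof of Lemma \ref{lem-w-wh-1-norm}: a triangle inequality gives $\|u_h\|_{2,\Omega} \leq \|u_h - \Pi_1 u\|_{2,\Omega} + \|\Pi_1 u\|_{2,\Omega}$; the inverse estimate on the piecewise polynomial $u_h - \Pi_1 u$ reduces the first summand to $Ch^{-1}\|u_h - \Pi_1 u\|_{1,\Omega}$; and a further triangle inequality with Theorem \ref{thm-u-uh-1-norm} and the projection error $\|u - \Pi_1 u\|_1 \leq Ch\|u\|_2$ gives $\|u_h - \Pi_1 u\|_{1,\Omega} = O(h)(\|u\|_2 + \|f\|_2)$, while $\|\Pi_1 u\|_{2,\Omega} \leq C\|u\|_2$ as in Lemma \ref{lem-w-wh-1-norm}. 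Hence $\|u_h\|_{2,\Omega} \leq C(\|u\|_2 + \|f\|_2)$, and together with $\|w_h\|_2 \leq C\|\theta\|_0$ the second bracket is also $O(h^2)(\|u\|_2 + \|f\|_2)\|\theta\|_0$. Collecting all contributions yields $\|\theta\|_0^2 \leq Ch^2(\|u\|_2 + \|f\|_2)\|\theta\|_0$, and division by $\|\theta\|_0$ gives the advertised $O(h^2)$ convergence in $L^2$.
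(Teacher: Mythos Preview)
Your proposal is correct and follows essentially the same Aubin--Nitsche duality argument as the paper: the same splitting $A(\theta,w) = A(\theta,w-w_h) + A(\theta,w_h)$, the same rewriting of $A(\theta,w_h)$ via $A(u,w_h)=(f,w_h)$ and $A_h(u_h,w_h)=\langle f,w_h\rangle_h$, the same $O(h^2)\|u_h\|_2\|w_h\|_2$ quadrature bound (which the paper phrases as an application of Theorem~\ref{bilinear-form-appr-error} with vanishing third derivatives rather than as the pre-inverse-estimate step of Lemma~\ref{1st-order-bh-cor-bilinear}, but the content is identical), and the same control of $\|u_h\|_2$ via $\Pi_1 u$, the inverse estimate, and Theorem~\ref{thm-u-uh-1-norm}.
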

\begin{proof}
With $\theta=u-u_h \in H_0^1(\Omega)$, we have
\begin{equation}\label{dual-main-split}
\begin{aligned}
\left\|\theta\right\|_0^2=\left(\theta, \theta\right)=A\left(\theta,  w\right)=A\left(u-u_h, w_h\right) + A\left(u-u_h, w-w_h\right) 
\end{aligned}
\end{equation}
For the first term \eqref{dual-main-split}, by Lemma \ref{bilinear-form-appr-error}, we have
\begin{equation}\label{dual-main-split-1-term}
\begin{aligned}
& A\left(u-u_h, w_h\right)=\left[A\left(u, w_h\right)-A_h\left(u_h, w_h\right)\right]+\left[A_h\left(u_h, w_h\right)-A\left(u_h, w_h\right)\right]\\
= & \left(f, w_h\right)-\left\langle f, w_h\right\rangle_h +\mathcal{O}\left(h^2 \right)\|u_h\|_{3}\left\|w_h\right\|_2 \\
= &\mathcal{O}\left(h^{2}\right)\|f\|_{2}\left\|w_h\right\|_1 + \mathcal{O}\left(h^{2}\right)\|u_h\|_{2}\left\|w_h\right\|_2 \\
= & \mathcal{O}\left(h^{2}\right)(\|f\|_{2} +  \|u_h\|_{2})\|\theta\|_{0},
\end{aligned}
\end{equation}
where in the second last equation Lemma \ref{rhs-quad-error} and the fact the third derivative of $Q^1$ polynomials vanish are used.
As the estimate of $\|w_h\|_2$ and $\|w\|_2$ in the proof of Lemma \ref{lem-w-wh-1-norm}, we have
\begin{equation}\label{uh-2}
\begin{aligned}
\left\|u_h\right\|_2 \leq & \left\|u_h-\Pi_1 u\right\|_2+\left\|\Pi_1 u - u\right\|_2+\|u\|_2 \leq C h^{-1}\left\|u_h-\Pi_1 u\right\|_1+C\|u\|_2 \\
\leq & C h^{-1}\left(\|u-\Pi_1 u\|_1+\left\|u-u_h\right\|_1\right) + C\|u\|_2 \\
\leq & C h^{-1}\left\|u-u_h\right\|_1 + C\|u\|_2 \\
\leq & C(\|u\|_2 + \|f\|_2),
\end{aligned}
\end{equation}
where Theorem \ref{thm-u-uh-1-norm} is used in the last inequality. Therefore, we have
\begin{equation}
\begin{aligned}
A\left(u-u_h, w_h\right) = \mathcal{O}\left(h^{2}\right)(\|f\|_{2} +  \|u\|_{2})\|\theta\|_{0}.
\end{aligned}
\end{equation}

For the second term \eqref{dual-main-split}, by continuity of the bilinear form and Lemma \ref{lem-w-wh-1-norm}, we have
\begin{equation}\label{dual-main-split-2-term}
\begin{aligned}
& A\left(u-u_h, w-w_h\right) \leq C\left\|u-u_h\right\|_1\left\|w-w_h\right\|_1 \leq Ch\left\|u-u_h\right\|_1\|w\|_2 \\
\leq & Ch\left\|u-u_h\right\|_1 \|\theta\|_{0} = \mathcal{O}\left(h^{2}\right)(\|f\|_{2} +  \|u\|_{2})\|\theta\|_{0}.
\end{aligned}
\end{equation}
Therefore, by \eqref{dual-main-split}, \eqref{dual-main-split-1-term} and \eqref{dual-main-split-2-term}, we have 
\begin{equation}
\begin{aligned}
\left\|\theta\right\|_0  = \mathcal{O}\left(h^{2}\right)(\|f\|_{2} +  \|u\|_{2}).
\end{aligned}
\end{equation}

\end{proof}

\begin{rmk}
Similar convergence results for the $Q^1$ method on general quasi-uniform quadrilateral meshes can be established via the same proof procedure in this section.
\end{rmk}

% \section{Matrix representation of continuous $Q^1$ finite element method}
% Consider a uniform grid $(x_i,y_j)$ for a rectangular domain $[0,1]^d$
% where
% $x_i = ih$, $i = 0,1,\dots, n+1$
% and
% $y_j = jh$, $j = 0,1,\dots, n+1$, $h=\frac{1}{n+1}$, where $n$ must be odd.
% Let $u_{ij}$ denote the numerical solution at $(x_i, y_j)$.
% Let $\mathbf u$ denote an abstract vector consisting of $u_{ij}$ for $i,j=1,2,\cdots,n$. Let $\bar{\mathbf u}$ denote an abstract vector consisting of $u_{ij}$ for $i,j=0,1,2,\cdots,n,n+1$.
% Let $\bar{\mathbf f}$ denote an abstract vector consisting of $f_{ij}$ for $i,j=1,2,\cdots,n$ and the boundary condition $g$ at the boundary grid points. 

% The scheme \eqref{scheme-1} for solving \eqref{elli-pde-1} can still be written as 
% $\mathcal L_h(\bar{\mathbf u})=\bar{\mathbf f}$.

\section{Extension to general quadrilateral meshes}\label{sec-gen-quad-mesh}
For a quadrilateral element $e$ as in Fig. \ref{quadrilateral-element}, let $\mathbf{F}_e = (F_{e1}, F_{e2})^T$ be the mapping such that $\mathbf{F}_e(\hat{K})=e$.

For $\varphi \in V^h_0$, by definition $\hat{\varphi} = \varphi|_e \circ \mathbf{F}_e \in Q^1(\hat K)$.
According to the chain rule, we have 
$$
\nabla \varphi \circ \mathbf{F}_e=D F_e^{T-1} \hat{\nabla} \hat{\varphi}
$$
where $\hat{\varphi} = \varphi \circ \mathbf{F}_e$, $\nabla=\left(\frac{\partial \quad}{\partial x_1}, \frac{\partial \quad}{\partial x_2}\right)^T$, $\hat{\nabla}=\left(\frac{\partial \quad }{\partial \hat{x}_1}, \frac{\partial \quad}{\partial \hat{x}_2}\right)^T$ and Jacobian matrix $D F_e=\left(\begin{array}{ll}\frac{\partial F_{e 1}}{\partial \hat{x}_1} & \frac{\partial F_{e 1}}{\partial \hat{x}_2} \\ \frac{\partial F_{e 2}}{\partial \hat{x}_1} & \frac{\partial F_{e 2}}{\partial \hat{x}_2}\end{array}\right)$.

Therefore, we have
\begin{equation}\label{int-trans}
\int_{e} \mathbf{a} \nabla u_h \cdot \nabla v_h d \mathbf{x} = \int_{\hat{K}} \left(DF_e^{-1}\hat{\mathbf{a}} DF_e^{T-1} \hat{\nabla} \hat{u}_h\right) \cdot \hat{\nabla} \hat{v}_h \left|\operatorname{det}(DF_e)\right| d\hat{\mathbf{x}}.
\end{equation}

In the case of regular meshes with mesh size $h$, the matrix $D F_e^{-1} \hat{\mathbf{a}}D F_e^{T-1} = \frac{1}{h^2}\hat{\mathbf{a}}$ and $\operatorname{det}(DF_e) = h^2$.

Approximate \eqref{int-trans} by the mixed quadrature \eqref{mixed-quad-ref} with parameter $\boldsymbol{\lambda} = (\lambda^1, \lambda^2)$, i.e.
\begin{equation}\label{int-trans-quad}
\int_{e} \left(\mathbf{a} \nabla u_h \right) \cdot \nabla v_h \mathrm{~d} \mathbf{x} \approx \int_{\hat{K}} \left( \tilde{\mathbf{a}} \hat{\nabla} \hat{u}_h\right) \cdot \hat{\nabla} \hat{v}_h d^h_{\boldsymbol{\lambda}}\hat{\mathbf{x}}
\end{equation}
where $\tilde{\mathbf{a}} =  \left(\left|\operatorname{det}(DF_e)\right|DF_e^{-1}\hat{\mathbf{a}} DF_e^{T-1}\right)(\frac{1}{2}, \frac{1}{2})$.

As in Fig. \ref{quadrilateral-element}, denote $$\overrightarrow{\mathbf{c}_0} = \mathbf{c}_{0,1}-\mathbf{c}_{0,0}, \quad \overrightarrow{\mathbf{c}_1} = \mathbf{c}_{1,0}-\mathbf{c}_{0,0}, \quad \overrightarrow{\mathbf{c}_2} = \mathbf{c}_{1,1}-\mathbf{c}_{1,0}, \quad \overrightarrow{\mathbf{c}_3} = \mathbf{c}_{1,1}-\mathbf{c}_{0,1}$$
and
\[
\overrightarrow{\mathbf{c}_i} = (c_i^0, c_i^1)^T, \, i = 0,1,2,3,\quad DF_h = DF_e(\frac{1}{2}, \frac{1}{2}), \quad \bar{\mathbf{a}}_e = \mathbf{a}|_e\circ \mathbf{F}_e(\frac{1}{2}, \frac{1}{2}),
\]
then we have 
\begin{align*}
DF_h = \frac{1}{2}
\begin{pmatrix}
c_1^0+c_3^0 & c_0^0+c_2^0\\
c_1^1+c_3^1 & c_0^1+c_2^1\\
\end{pmatrix}, \quad 
DF_h^{-1} = \frac{1}{2\operatorname{det}(DF_h)}
\begin{pmatrix}
c_0^1+c_2^1 & -c_0^0-c_2^0\\
-c_1^1-c_3^1 & c_1^0+c_3^0\\
\end{pmatrix},\\
\end{align*}
and on element $e$, we have
\begin{equation}\label{tilde-a-def}
\tilde{\mathbf{a}} =  \operatorname{det}(DF_h)DF_h^{-1}\bar{\mathbf{a}}_e DF_h^{T-1} = 
\begin{pmatrix}
\tilde{a}_e^{11} & \tilde{a}_e^{12}\\
\tilde{a}_e^{12} & \tilde{a}_e^{22}\\
\end{pmatrix}.
\end{equation}

To have the stiffness matrix an $M$-matrix, by Theorem \ref{thm-stif-m-matr}, the following is a sufficient condition:
\begin{equation}\label{quad-mesh-m-matr-cond}
\left|\tilde{a}_e^{12}\right| \leq \min\{\tilde{a}_e^{11}, \tilde{a}_e^{22} \},
\end{equation}
where
\begin{equation*}
\begin{aligned}
\tilde{a}^{11} 
= & \frac{1}{4|\operatorname{det}(DF_h)|}\begin{pmatrix}
c_0^1+c_2^1 & -c_0^0-c_2^0
\end{pmatrix}
\begin{pmatrix}
\bar{a}^{11} & \bar{a}^{12} \\
\bar{a}^{12} & \bar{a}^{22} \\
\end{pmatrix}
\begin{pmatrix}
c_0^1+c_2^1 \\
-c_0^0-c_2^0
\end{pmatrix}\\
= & \frac{1}{4|\operatorname{det}(DF_h)|}\begin{pmatrix}
c_0^0+c_2^0 & c_0^1+c_2^1
\end{pmatrix}
\begin{pmatrix}
0 & -1 \\
1 & 0 \\
\end{pmatrix}
\begin{pmatrix}
\bar{a}^{11} & \bar{a}^{12} \\
\bar{a}^{12} & \bar{a}^{22} \\
\end{pmatrix}
\begin{pmatrix}
0 & 1 \\
-1 & 0 \\
\end{pmatrix}
\begin{pmatrix}
c_0^0+c_2^0 \\
c_0^1+c_2^1
\end{pmatrix}\\
= & \frac{1}{4|\operatorname{det}(DF_h)|}\begin{pmatrix}
c_0^0+c_2^0 & c_0^1+c_2^1
\end{pmatrix}
\begin{pmatrix}
\bar{a}^{22} & -\bar{a}^{12} \\
-\bar{a}^{12} & \bar{a}^{11} \\
\end{pmatrix}
\begin{pmatrix}
c_0^0+c_2^0 \\
c_0^1+c_2^1
\end{pmatrix}\\
= & \frac{\operatorname{det}(\bar{\mathbf{a}}_e)}{4|\operatorname{det}(DF_h)|}\left( \overrightarrow{\mathbf{c}_0} + \overrightarrow{\mathbf{c}_2}\right)^T\bar{\mathbf{a}}_e^{-1} \left( \overrightarrow{\mathbf{c}_0} + \overrightarrow{\mathbf{c}_2}\right),
\end{aligned}
\end{equation*}
and similarly
\begin{align*}
\tilde{a}^{12} = &  -\frac{\operatorname{det}(\bar{\mathbf{a}}_e)}{4|\operatorname{det}(DF_h)|}\left( \overrightarrow{\mathbf{c}_0} + \overrightarrow{\mathbf{c}_2}\right)^T\bar{\mathbf{a}}_e^{-1} \left( \overrightarrow{\mathbf{c}_1} + \overrightarrow{\mathbf{c}_3}\right), \\
\tilde{a}^{22} = & \frac{\operatorname{det}(\bar{\mathbf{a}}_e)}{4|\operatorname{det}(DF_h)|}\left( \overrightarrow{\mathbf{c}_1} + \overrightarrow{\mathbf{c}_3}\right)^T\bar{\mathbf{a}}_e^{-1} \left( \overrightarrow{\mathbf{c}_1} + \overrightarrow{\mathbf{c}_3}\right).
\end{align*}

By $\overrightarrow{\mathbf{c}_1}+\overrightarrow{\mathbf{c}_2}-\overrightarrow{\mathbf{c}_3} - \overrightarrow{\mathbf{c}_0} = \overrightarrow{0}$, we note
\eqref{quad-mesh-m-matr-cond} is equivalent to
\begin{equation}\label{mesh-cond}
\begin{aligned}
\left( \overrightarrow{\mathbf{c}_0} + \overrightarrow{\mathbf{c}_2}\right)^T\bar{\mathbf{a}}_e^{-1}\left( \overrightarrow{\mathbf{c}_0} + \overrightarrow{\mathbf{c}_3}\right) \geq & 0, \quad
\left( \overrightarrow{\mathbf{c}_0} + \overrightarrow{\mathbf{c}_2}\right)^T\bar{\mathbf{a}}_e^{-1}\left( \overrightarrow{\mathbf{c}_0} - \overrightarrow{\mathbf{c}_1}\right) \geq  0, \\
\left( \overrightarrow{\mathbf{c}_1} + \overrightarrow{\mathbf{c}_3}\right)^T\bar{\mathbf{a}}_e^{-1}\left( \overrightarrow{\mathbf{c}_0} + \overrightarrow{\mathbf{c}_3}\right) \geq & 0, \quad
\left( \overrightarrow{\mathbf{c}_1} + \overrightarrow{\mathbf{c}_3}\right)^T\bar{\mathbf{a}}_e^{-1}\left( \overrightarrow{\mathbf{c}_1} - \overrightarrow{\mathbf{c}_0}\right) \geq  0.
\end{aligned}
\end{equation}

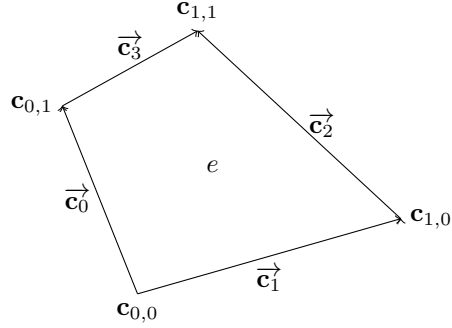
\begin{figure}[h]\label{quadrilateral-element}
\centering
\begin{tikzpicture}
\coordinate (A) at (0,0);
\coordinate (B) at (3.5,1);
\coordinate (C) at (.8,3.5);
\coordinate (D) at (-1,2.5);

\coordinate (AB) at (1.75,0.5);
\coordinate (BC) at (2.15,2.25);
\coordinate (CD) at (-0.1,3);
\coordinate (AD) at (-0.5,1.25);

\coordinate (E) at (1,1.5);

\path
(A) node[below] {$\mathbf{c}_{0,0}$}
(B) node[right] {$\mathbf{c}_{1,0}$}
(C) node[above] {$\mathbf{c}_{1,1}$}
(D) node[left] {$\mathbf{c}_{0,1}$}
(E) node[above] {$e$}

(AB) node[below] {$\overrightarrow{\mathbf{c}_1}$}
(BC) node[right] {$\overrightarrow{\mathbf{c}_2}$}
(CD) node[above] {$\overrightarrow{\mathbf{c}_3}$}
(AD) node[left] {$\overrightarrow{\mathbf{c}_0}$}
;

\draw[->] (A)edge(B) (B)edge(C);
\draw[->] (A)edge(D) (D)edge(C);

  % \draw[blue,dashed] (A)--(C);
  % \draw[blue,dashed] (B)--(D);
\end{tikzpicture}
\caption{A quadrilateral element $e$.}
\end{figure}

\begin{theorem}
With $\tilde{\mathbf{a}}$ defined in \eqref{tilde-a-def}, if the quadrilateral mesh fulfill the condition \eqref{quad-mesh-m-matr-cond}  or the mesh condition \eqref{mesh-cond}, then the stiffness matrix of the linear $Q^1$ finite element scheme \eqref{hom-var-num-quad} for solving BVP \eqref{elli-pde-1} is an $M$-matrix.
\end{theorem}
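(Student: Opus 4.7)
The plan is a direct reduction to Theorem~\ref{thm-stif-m-matr}. The chain-rule identity \eqref{int-trans}, together with the midpoint evaluation of the Jacobian factor baked into the quadrature rule \eqref{int-trans-quad}, shows that when the diffusion contribution on a general quadrilateral element $e$ is pulled back to the reference square $\hat K$, it becomes an integral against the \emph{constant} symmetric positive-definite matrix $\tilde{\mathbf{a}}_e$ defined in \eqref{tilde-a-def}, discretized by exactly the same mixed quadrature rule as in the uniform-mesh analysis. Hence, after pullback, every element looks algebraically like a unit square with constant coefficient $\tilde{\mathbf{a}}_e$, and the four local values computed in \eqref{local-stif-matr-val} remain valid verbatim with $\bar{\mathbf{a}}_e$ replaced by $\tilde{\mathbf{a}}_e$.

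With this reduction, I would invoke the hypothesis $|\tilde a_e^{12}|\le\min\{\tilde a_e^{11},\tilde a_e^{22}\}$ from \eqref{quad-mesh-m-matr-cond} and choose element-wise quadrature parameters $(\lambda_e^1,\lambda_e^2)$ via the interval \eqref{param-setting} applied to the entries of $\tilde{\mathbf{a}}_e$. The computation in the proof of Theorem~\ref{thm-stif-m-matr} then gives $\int_{\hat K}\tilde{\mathbf{a}}_e\hat\nabla\hat\varphi_j\cdot\hat\nabla\hat\varphi_i\, d^h_{\boldsymbol\lambda_e}\hat{\mathbf{x}}\le 0$ for each pair of distinct vertices on $\hat K$, so after assembly $A_{ij}\le 0$ for $i\ne j$. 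The row-sum and irreducibility step carries over unchanged: since $\sum_j\varphi_j\equiv 1$, one has $\sum_j A_{ij}=\mathcal A_h(1,\varphi_i)=\int_\Omega c\,\varphi_i\,d^h_1\mathbf{x}\ge 0$, with strict positivity whenever $\mathbf{x}_i$ has a neighbor on $\partial\Omega$, because the (strictly negative) coefficient linking $\mathbf{x}_i$ to that boundary node is missing from the reduced interior matrix. Plemmons' condition $C_{10}$ then yields the $M$-matrix property.

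It remains to reconcile the two sufficient conditions. The identities derived in the excerpt express $\tilde a_e^{11}$, $\tilde a_e^{12}$, $\tilde a_e^{22}$ as inner products of $\overrightarrow{\mathbf c_0}+\overrightarrow{\mathbf c_2}$ and $\overrightarrow{\mathbf c_1}+\overrightarrow{\mathbf c_3}$ with respect to $\bar{\mathbf{a}}_e^{-1}$, all sharing the common positive prefactor $C=J_{e,h}/\bigl(4\det(DF_h)^2\det(\bar{\mathbf{a}}_e)\bigr)$. Dividing through by $C$, the coefficient inequalities $\tilde a_e^{11}\pm\tilde a_e^{12}\ge 0$ and $\tilde a_e^{22}\pm\tilde a_e^{12}\ge 0$ translate, via the parallelogram identity $\overrightarrow{\mathbf c_0}+\overrightarrow{\mathbf c_3}=\overrightarrow{\mathbf c_1}+\overrightarrow{\mathbf c_2}$ relating the four edge vectors, into the four inner-product inequalities appearing in \eqref{mesh-cond}, so that either form of the hypothesis suffices.

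The main obstacle is really just the bookkeeping at two points: (i) verifying that the midpoint evaluation of $J_e\,DF_e^{-1}\bar{\mathbf{a}}_eDF_e^{T-1}$ inherent in \eqref{int-trans-quad} is precisely what the scheme prescribes so that the reference-element analysis is exact rather than approximate, and (ii) rewriting $\tilde a_e^{ii}\pm\tilde a_e^{12}$ as the inner products in \eqref{mesh-cond} using the parallelogram identity. Everything beyond that is a direct transcription of the proof of Theorem~\ref{thm-stif-m-matr}.
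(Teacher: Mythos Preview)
Your proposal is correct and follows essentially the same route as the paper: the theorem is stated immediately after the derivation that pulls the element integral back to $\hat K$ with the constant matrix $\tilde{\mathbf a}_e$ and invokes Theorem~\ref{thm-stif-m-matr}, together with the algebraic rewriting of \eqref{quad-mesh-m-matr-cond} as \eqref{mesh-cond} via the identity $\overrightarrow{\mathbf c_1}+\overrightarrow{\mathbf c_2}=\overrightarrow{\mathbf c_0}+\overrightarrow{\mathbf c_3}$. One small refinement: in the row-sum step, the paper only claims strict positivity when $\mathbf x_i$ has \emph{two} boundary neighbors (so that at least one of the two local coefficients in \eqref{local-stif-matr-val} is strictly negative), not for every node with a single boundary neighbor; your slightly stronger claim need not hold, but the weaker one already suffices for Plemmons' condition.
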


\begin{rmk}
If the diffusion coefficient matrix degenerate to a scalar, i.e. $\mathbf{a} =\alpha(\mathbf{x}) I$, a sufficient condition for \eqref{mesh-cond} is that both diagonals of the quadrilateral element bisect each angle, resulting in two non-obtuse angles for each vertex.
\end{rmk}

\begin{rmk}
By adopting some anisotropic mesh adaptation strategy where an anisotropic mesh is generated as an $M$-uniform mesh or a uniform mesh in the metric specified by the diffusion matrix $\mathbf{a}$. The method \eqref{hom-var-num-quad} for any anistropic problem possibly can be monotone on that anisotropic mesh.
\end{rmk}

If we consider rectangular meshes, for simplicity we assume 
\begin{equation*}
\mathbf{c}_{0,0} = (0,0),\quad \mathbf{c}_{1,0} = (h_1,0),\quad \mathbf{c}_{0,1} = (0,h_2),\quad \mathbf{c}_{1,1} = (h_1,h_2).
\end{equation*}
Then we have
\begin{equation*}
\tilde{\mathbf{a}} = \begin{pmatrix}
\frac{h_2}{h_1}\bar{a}^{11} & \bar{a}^{12} \\
\bar{a}^{12} & \frac{h_1}{h_2}\bar{a}^{22} 
\end{pmatrix}
\end{equation*}
and \eqref{quad-mesh-m-matr-cond} becomes
\begin{equation}\label{rect-m-cond}
|\bar{a}_e^{12}| \leq \min\{\frac{h_2}{h_1}\bar{a}_e^{11}, \frac{h_1}{h_2}\bar{a}_e^{22} \}.
\end{equation}
Recall that $\sqrt{\bar{a}_e^{11}\bar{a}_e^{22} }\ge |\bar{a}_e^{12}|$, taking 
\begin{equation}\label{anisotropic-mesh-cond}
\frac{h_1}{h_2} = \sqrt{\frac{\bar{a}_e^{11}}{\bar{a}_e^{22}}}
\end{equation} 
will guarantee \eqref{rect-m-cond}. Therefore, if the rectangular mesh is deployed with aspect ratio $\sqrt{\frac{\bar{a}_e^{11}}{\bar{a}_e^{22}}}$, then the stiffness matrix of the $Q^1$ method \eqref{hom-var-num-quad} is an $M$-matrix.

If the elliptic coefficient $\mathbf{a}$ is constant on the whole domain $\Omega$, when the rectangular mesh are fine enough, there must exist rectangular mesh with aspect ratio approximatly $\sqrt{\frac{\bar{a}_e^{11}}{\bar{a}_e^{22}}}$ such that the stiffness matrix of scheme \eqref{hom-var-num-quad} solve the BVP \eqref{elli-pde-1} is an $M$-matrix.

\begin{rmk}
Unfortunately, 
the technique in this paper cannot be easily extended to the three-dimensional case. For the three-dimensional case,  with the basis on the reference element $\hat{K}=[0,1]^3$
\begin{equation}\label{3d-basis-func}
\hat{\phi}_{i,j,k} = \hat{x}_1^i(1-\hat x_1)^{1-i}\hat{x}_2^j(1 - \hat x_2)^{1-j}\hat{x}_3^k(1 - \hat x_3)^{1-k}, \, i,j,k = 0,1,
\end{equation}
and the same derivation as in two-dimensional case, we find out
\begin{align*}
\langle \mathbf{\bar{a}}\nabla \phi_{0,0,0},\nabla \phi_{1,1,0} \rangle_{h} = -\frac{1}{16}(1+\lambda_e^3)\left[(1-\lambda_e^2)\bar{a}^{11}_e + (1-\lambda_e^1)\bar{a}^{22}_e + 2\bar{a}^{12}_e\right]+ \frac{1}{16}(1-\lambda_e^1)(1-\lambda_e^2)\bar{a}^{33}_e.
\end{align*}
For the symmetric positive-definite coefficient matrix
\begin{align*}
\mathbf{a} = \begin{pmatrix}
1 & -1+\epsilon & \epsilon\\
-1+\epsilon & 1 & -1+\epsilon \\
\epsilon & -1+\epsilon & 1
\end{pmatrix}
\end{align*}
with $\frac{1}{4}(5-\sqrt{17}) < \epsilon < \frac{1}{2}$, 
we obtain
\begin{align*}
\langle \mathbf{\bar{a}}\nabla \phi_{0,0,0},\nabla \phi_{1,1,0} \rangle_{h} = \frac{1}{16}(1+\lambda_e^3)(\lambda_e^1 + \lambda_e^2 - 2\epsilon) + \frac{1}{16}(1-\lambda_e^1)(1-\lambda_e^2) \geq \frac{1}{16}(1-2\epsilon).
\end{align*}
Then obviously the stiffness matrix fails to be an $M$-matrix. 
\end{rmk}
\section{Numerical experiment}\label{sec-test}
\subsection{Numerical experiments on uniform meshes}
In this subsection, we show tests verifying the proved order of accuracy and monotonicity of the scheme \eqref{hom-var-num-quad} on uniform rectangular meshes. 
We consider the following two-dimensional elliptic equation with Dirichlet boundary condition:  
\begin{equation} \label{test-eqn-1}
 - \nabla\cdot(\mathbf a\nabla u)+c u=f\quad \textrm{on } [0,\pi]^2
\end{equation}
where $\mathbf a=\left( {\begin{array}{cc}
   a^{11} & a^{12} \\
   a_{21} & a^{22} \\
  \end{array} } \right)$, $a^{11}=a^{12}=a_{21}=1+10x_2^2+x_1\cos{x_2}+x_2$,  $a^{22}=2+10x_2^2+x_1\cos{x_2}+x_2$ and $c = x_1^2x_2^2$, with an exact solution
  $$u(x_1, x_2)=-\sin^2{x_1}\sin{x_2}\cos{x_2}.$$

When solving this problem with our method, we take the quadrature parameter in element $e$ as $\lambda_e^1 = \lambda_e^2 =1-\frac{2|a_e^{12}|}{a_e^{11}+a_e^{22}}$.

The errors are reported in Table  \ref{elliptic-dirichlet-uniform-mesh}. We observe the desired second-order convergence in the discrete $l^2$-norm and infinity norm for the function values.  
\begin{table}[h]
\label{elliptic-dirichlet-uniform-mesh}
\centering
\caption{A two-dimensional elliptic equation with Dirichlet boundary conditions on uniform meshes. The first column is the number of elements in a finite element mesh. The second column is the number of degree of freedoms. }
\begin{tabular}{|c |c |c c|c c|}
\hline  FEM Mesh &  DoF & $l^2$ error  &  order & $l^\infty$ error & order \\
\hline
 $4\times 4$ & $3^2$ & 3.56E-1 & - & 2.70E-2 & -\\
\hline
 $8\times 8$&  $7^2$& 6.41E-2 & 2.47 & 4.89E-2 & 2.47\\
\hline
 $16\times 16$& $15^2$ & 1.49E-3 & 2.11 & 1.15E-2 & 2.08 \\
\hline
$32\times 32$ & $31^2$ & 3.65E-3 & 2.03 & 2.91E-3 & 1.99 \\ 
 \hline
 $64\times 64$ &  $63^2$ & 9.08E-4 & 2.01 & 7.25E-4 & 2.00\\
\hline
\end{tabular}
\end{table}

The monotonicity is verified by the smallest entries in $L_h^{-1}$ and $\bar{L}_h^{-1}$ which are listed in Table \ref{elliptic-unifrom-mesh}. As we can see, $L_h^{-1} \geq 0$ and $\bar{L}_h^{-1} \geq 0$ are achieved.
\begin{table}[h]
\label{elliptic-unifrom-mesh}
\centering
\caption{Minimum of entries in $\bar L_h^{-1}$ and $L_h^{-1}$  for elliptic equation \eqref{test-eqn-1} with smooth coefficients  on uniform meshes.}
\begin{tabular}{|c|c|c|}
\hline
FEM Mesh & $\bar L_h^{-1}$ & $L_h^{-1}$ \\ \hline
$4\times 4$ & $0$ & 6.38E-06  \\ \hline
$8\times 8$ & $0$ & 4.26E-10  \\ \hline
$16\times 16$ & $0$ & 2.40E-14 \\ \hline
$32\times 32$ & $0$ & 1.42E-18 \\ \hline
$64\times 64$ & $0$ & 9.24E-23 \\ \hline
\end{tabular}
\end{table}

Then we consider a more anisotropic case in the form of \eqref{test-eqn-1}
with {anisotropic-coef} 
\begin{equation}\label{anisotropic-coef}
a^{11}=1, \quad a^{12}=a_{21}=9.99, \quad a^{22}=100, \quad c = x_1^2x_2^2 
\end{equation}
and exact solution
  $$u(x_1, x_2)=-\sin^2{x_1}\sin{x_2}\cos{x_2}.$$

As stated in \eqref{anisotropic-mesh-cond}, we set $\frac{h_1}{h_2} = \sqrt{\frac{a^{11}}{a^{22}}} = 10$, then we examine the accuracy and monotonicty of the method.  When solving this problem with our method, we take the quadrature parameter in element $e$ as $\lambda_e^1 = \lambda_e^2 =1-\frac{2|\tilde a_e^{12}|}{\tilde a_e^{11}+\tilde a_e^{22}}$.

The errors are reported in Table  \ref{anisotropic-elliptic-dirichlet-uniform-mesh}. We observe the desired second-order convergence in the discrete $l^2$-norm and infinity norm for the function values.  

 \begin{table}[h]
\label{anisotropic-elliptic-dirichlet-uniform-mesh}
\centering
\caption{A two-dimensional elliptic equation with anisotropic coefficients \eqref{anisotropic-coef} and Dirichlet boundary conditions on anisotropic meshes.}
\begin{tabular}{|c |c |c c|c c|}
\hline  FEM Mesh &  DoF & $l^2$ error  &  order & $l^\infty$ error & order \\
\hline
 $40\times 4$ & $39 \times 3$ & 1.58E-1 & - & 1.20E-1 & -\\
\hline
 $80\times 8$&  $79 \times 7$& 3.59E-2 & 2.14 & 2.72E-2 & 2.14\\
\hline
 $160\times 16$& $159\times 15$ & 8.76E-3 & 2.03 & 6.65E-3 & 2.03 \\
\hline
$320\times 32$ & $319\times 31$ & 2.18E-3 & 2.01 & 1.65E-3 & 2.01 \\ 
 \hline
 $640\times 64$ &  $639\times 63$ & 5.44E-4 & 2.00 & 4.13E-4 & 2.00\\
\hline
\end{tabular}
\end{table}

The monotonicity is verified by the smallest entries in $L_h^{-1}$ and $\bar{L}_h^{-1}$ which are listed in Table \ref{anisotropic-elliptic-unifrom-mesh}. 
\begin{table}[h]
\label{anisotropic-elliptic-unifrom-mesh}
\centering
\caption{Minimum of entries in $\bar L_h^{-1}$ and $L_h^{-1}$  for elliptic equation \eqref{test-eqn-1} with anisotropic coefficients \eqref{anisotropic-coef}  on anisotropic meshes.}
\begin{tabular}{|c|c|c|}
\hline
FEM Mesh & $\bar L_h^{-1}$ & $L_h^{-1}$ \\ \hline
$40\times 4$ & $0$ & $0$  \\ \hline
$80\times 8$ & $0$ & $0$  \\ \hline
$160\times 16$ & $0$ & $0$ \\ \hline
$320\times 32$ & $0$ & $0$ \\ \hline
$640\times 64$ & $0$ & $0$ \\ \hline
\end{tabular}
\end{table}

\subsection{Numerical experiments on quadrilateral meshes}
In this subsection, we show tests verifying the proved order of accuracy and monotonicity of the scheme \eqref{hom-var-num-quad} on general quadrilateral meshes. 
We consider the following two-dimensional Poisson equation with Dirichlet boundary condition:  
\begin{equation} \label{test-eqn-2}
 - \nabla\cdot(a\nabla u)+c u=f\quad \textrm{on } [0,\pi]^2
\end{equation}
where $a=1+10x_2^2+x_1\cos{x_2}+x_2$ and $c = x_1^2x_2^2$, with an exact solution
  $$u(x_1, x_2)=-\sin^2{x_1}\sin{x_2}\cos{x_2}.$$

The domain $[0,\pi]^2$ is partitioned into  $N_y \times N_x$ elements, where the elements are forced to adapt to an inner edge. The angle between the inner edge and the $x$-axis is $\arctan(\frac{6\sqrt{3}}{5})$ as depicted in Figure \ref{quad-mesh}, where $N_y = N_x = 16$.  When solving this problem with our method, we take the quadrature parameter in element $e$ as $\lambda_e^1 = \lambda_e^2 =1-\frac{2|\tilde a_e^{12}|}{\tilde a_e^{11}+\tilde a_e^{22}}$.
\begin{figure}[ht]\label{quad-mesh}
\centering
\includegraphics[scale=0.6]{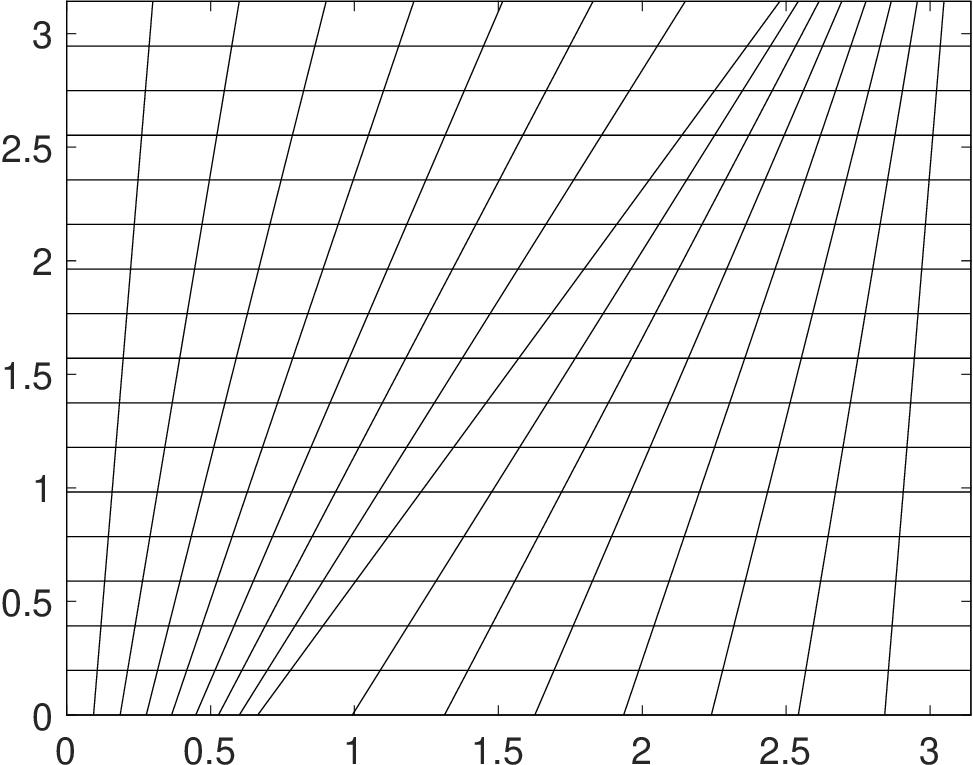}
\caption{Quadrilateral mesh.}
\end{figure}

The errors are reported in Table  \ref{elliptic-dirichlet-quad-mesh}. We observe the desired second-order convergence in the discrete $l^2$-norm and infinity norm for the function values.  

 \begin{table}[h]
\label{elliptic-dirichlet-quad-mesh}
\centering
\caption{A two-dimensional Poisson equation with Dirichlet boundary conditions on quadrilateral meshes}
\begin{tabular}{|c |c |c c|c c|}
\hline  FEM Mesh &  DoF & $l^2$ error  &  order & $l^\infty$ error & order \\
\hline
 $4\times 4$ & $3^2$ & 1.24E-1 & - & 8.70E-2 & -\\
\hline
 $8\times 8$&  $7^2$& 3.19E-2 & 1.96 & 2.84E-2 & 1.61\\
\hline
 $16\times 16$& $15^2$ & 7.82E-3 & 2.03 & 6.93E-3 & 2.04 \\
\hline
$32\times 32$ & $31^2$ & 1.94E-3 & 2.01 & 1.76E-3 & 1.97 \\ 
 \hline
 $64\times 64$ &  $63^2$ & 4.85E-4 & 2.00 & 4.41E-4 & 2.00\\
\hline
\end{tabular}
\end{table}

For the quadrilateral meshes in Figure \ref{quad-mesh}, we can verify that \eqref{quad-mesh-m-matr-cond} are satisfied on each elements numerically. Then we verify the monotonicity through the smallest entries in $L_h^{-1}$ and $\bar{L}_h^{-1}$ which are listed in Table \ref{elliptic-quad-mesh}. As we can see, $L_h^{-1} \geq 0$ and $\bar{L}_h^{-1} \geq 0$ are achieved.
\begin{table}[h]
\label{elliptic-quad-mesh}
\centering
\caption{Minimum of entries in $\bar L_h^{-1}$ and $L_h^{-1}$  for elliptic equation \eqref{test-eqn-1} on quadrilateral meshes.}
\begin{tabular}{|c|c|c|}
\hline
FEM Mesh & $\bar L_h^{-1}$ & $L_h^{-1}$ \\ \hline
$4\times 4$ & $0$ & 2.31E-4  \\ \hline
$8\times 8$ & $0$ & 1.47E-5  \\ \hline
$16\times 16$ & $0$ & 8.06E-7 \\ \hline
$32\times 32$ & $0$ & 4.64E-8 \\ \hline
$64\times 64$ & $0$ & 2.77E-9 \\ \hline
\end{tabular}
\end{table}

\section{Conclusion}
We constructed a linear monotone $Q^1$ finite element method for anistropic diffusion problem \eqref{elli-pde-1}. On uniform meshes, when the diffusion matrix is diagonally dominant, the $M$-matrix property is guaranteed thus monotonicity is achieved. When this $Q^1$ finite element method is deployed on a general quadrilateral mesh, we obtain a local mesh constraint.

\bibliographystyle{siamplain}

\bibliography{references.bib}

\end{document}